\newtheorem{theorem}{Theorem}[section]
\newtheorem{lemma}[theorem]{Lemma}
\newtheorem{proposition}[theorem]{Proposition}
\newtheorem{cor}[theorem]{Corollary}
\theoremstyle{definition}
\newtheorem{definition}[theorem]{Definition}
\theoremstyle{remark}
\newtheorem{remark}[theorem]{Remark}
\newcommand{\CC}{\mathbb{C}}
\newcommand{\NN}{\mathbb{N}}
\newcommand{\id}{\mathop{\mathrm{id}}}
\newcommand{\cont}{\mathcal{C}}
\newcommand{\hol}{{\mathcal{O}_\mathrm{hol}}}
\newcommand{\regular}{{\mathcal{O}_\mathrm{alg}}}
\newcommand{\aut}{\mathop{\mathrm{Aut}_\mathrm{hol}}}
\newcommand{\autalg}{\mathop{\mathrm{Aut}_\mathrm{alg}}}
\title[The density property for Gizatullin surfaces]{The density property for Gizatullin surfaces with reduced degenerate fibre}
\date{21. November 2016}
\author[R.~Andrist]{Rafael~B.~Andrist}
\address{Rafael B. Andrist \\ School of Mathematics and Natural Sciences \\ University of Wuppertal \\ Germany}
\email{rafael.andrist@math.uni-wuppertal.de}
\subjclass{Primary 32M17, 32M12, Secondary 14R10}
\keywords{Density property, Andersen-Lempert theory, Gizatullin surfaces, holomorphic automorphisms, holomorphic flexibility, elliptic manifolds}
\begin{document}

\begin{abstract}
All Gizatullin surfaces that admit such a $\mathbb{C}^+$-action for which the quotient is a $\mathbb{C}^1$-fibration with a reduced degenerate fibre, have the density property. We also give a description of the identity component of the group of holomorphic automorphisms of these surfaces.
\end{abstract}

\maketitle

\section{Introduction}

In this article we want to contribute to the classification of smooth complex affine-algebraic surfaces with the density property by showing that a large subclass of so-called Gizatullin surfaces has the density property.

\smallskip

The \emph{density property} has been introduced by Varolin \cites{Varolin1, Varolin2} to describe precisely what it means for a complex manifold to have ``large'' group of holomorphic automorphisms. Let us recall the following definitions:

\begin{definition}[\cite{Varolin1}]
A complex manifold $X$ has the \emph{density property} if the Lie algebra $\mathrm{Lie}_{\mathrm{hol}}(X)$ generated by $\CC$-complete holomorphic vector fields on $X$ is dense in the Lie algebra of all holomorphic vector fields $\mathrm{VF}_{\mathrm{hol}}(X)$ on $X$ w.r.t.\ compact-open topology.
\end{definition}

\begin{definition}[\cite{Varolin1}]
An algebraic manifold $X$ has the \emph{algebraic density property} if the Lie algebra $\mathrm{Lie}_{\mathrm{alg}}(X)$ generated by $\CC$-complete algebraic vector fields on $X$ coincides with the Lie algebra of all algebraic vector fields $\mathrm{VF}_{\mathrm{alg}}(X)$ on $X$.
\end{definition}

The algebraic density property implies the density property.
The classical and most important example of a complex manifold with the (algebraic) density property is $\CC^n$ for $n \geq 2$ where the (algebraic) density property was established by Anders\'en and Lempert \cite{AndersenLempert}. It turned out that there are much more examples of complex affine-algebraic or Stein manifolds with the density property.

However, in complex dimension $2$, only very few examples are known and even in complex dimension $2$, a classification of all such surfaces is not in sight. In particular, also the description of all Stein surfaces $X$ on which the group of holomorphic automorphisms $\aut(X)$ acts transitively, is completely unclear. But in the algebraic category, the question of algebraic transitivity was ``almost'' resolved in the papers of Gizatullin and Danilov \cite{Gi}, \cite{dan-giz-autos}. We will need the following definition:

\begin{definition} A normal complex algebraic surface $X$ is called \emph{quasi-homogeneous} if the natural action of its group of algebraic automorphisms $\autalg(X)$ has an open orbit whose complement is at most finite.
\end{definition}

With the exception of the two-dimensional torus $\CC^\ast \times \CC^\ast$  and of $\CC \times \CC^\ast$, every affine quasi-homogeneous surface admits a completion $\bar X$ by a simple normal crossing divisor such that the dual graph $\Gamma$ of its boundary $\bar X \setminus X$ is a linear rational graph \cites{Gi, dan-giz-autos} which can be always chosen in the following standard form (a so-called \emph{standard zigzag})
\[
[[0, 0,-r_2 , \dots, -r_d]]
\]
where $d \geq 2$ and $r_j \geq 2$ for $j = 2, \dots, d$. The numbers given in $[[0, 0,-r_2 , \dots, -r_d]]$ are the self-intersection numbers of the the involved projective lines. This leads to the following definition.

\begin{definition}
A normal complex affine-algebraic surface that admits a completion by a simple normal crossing divisor such that its dual graph is a linear rational graph, is called a \emph{Gizatullin surface}.
\end{definition}

The following results are known so far:
For $d = 2$, resp.\ a zigzag  $[[0, 0, -r_2]]$, one obtains the so-called Danielewski surfaces. The density property for Danielewski surfaces was proved by Kaliman and Kutzschebauch \cite{KKhyper}.
For so-called Danilov--Gizatullin surfaces, which are described by a zigzag $[[0, 0, -2, \dots, -2]]$ and are (as smooth Gizatullin surfaces) completely determined up to isomorphism by the length of the zigzag, the density property was proved by Donzelli \cite{dan-giz-dens}.
For $d = 3$, the case of Gizatullin surfaces described by a zigzag $[[0, 0, -r_2, -r_3]]$, the density property was established recently by Kutzschebauch and Poloni and the author \cite{lengthfour}.

\bigskip
A Gizatullin surface always admits two non-conjugate $\CC^+$-actions, see e.g.\ Flenner, Kaliman and Zaidenberg \cite{wgraphs}.
In this article we want to consider Gizatullin surfaces with a priori any possible standard zigzag as boundary divisor, but we make a restriction regarding the $\CC^+$-actions. We require that there exists a $\CC^+$-action on the surface such that its quotient is a $\CC^1$-fibration with a single reduced degenerate fibre. One can always find a fibration with a single degenerate fibre, but in general this fibre will not be reduced.

Embeddings of precisely such surfaces, sometimes also called generalized Danielewski surfaces, have been constructed by Dubouloz \cite{Dubouloz}. In the next section we cite the defining equations \eqref{eqdef1} and \eqref{eqdef2} that were derived by Duboloz.

\begin{theorem}
\label{thmdensprop}
Let $S$ be a smooth Gizatullin surface that is described by the equations \eqref{eqdef1} and \eqref{eqdef2}. Then $S$ has the density property.
\end{theorem}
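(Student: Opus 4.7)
The plan is to establish the algebraic density property of $S$ via the Kaliman--Kutzschebauch criterion (the ``compatible pair'' criterion for locally nilpotent derivations on affine varieties), which automatically implies the holomorphic density property. The starting point is the explicit presentation \eqref{eqdef1}--\eqref{eqdef2} for $S$ coming from Dubouloz's construction: this presentation exhibits $S$ with a $\CC^+$-action whose quotient is a $\CC^1$-fibration with a single reduced degenerate fibre, and writing the equations out immediately gives a locally nilpotent derivation $\partial_1$ on the coordinate ring $\regular(S)$ whose kernel is the coordinate ring of the base of the fibration. By the results of Flenner--Kaliman--Zaidenberg a Gizatullin surface always carries a second, non-conjugate $\CC^+$-action, whose associated LND I call $\partial_2$. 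In the present set-up this second action can be read off from the equations as well (it corresponds, geometrically, to the second extremal arm in the standard zigzag).

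The first technical step is to understand the kernels $A_i := \ker\partial_i$ inside $\regular(S)$ and the partial nilpotency filtrations they induce, using the explicit equations. Since every element $a \in A_1$ gives a complete algebraic vector field $a\,\partial_1$ (and analogously for $A_2$), this produces a large explicit family of complete vector fields on $S$. In order to apply the Kaliman--Kutzschebauch criterion, the next step is to exhibit a \emph{semi-compatible pair}, i.e.\ elements $f \in A_1$ and $g \in A_2$ such that $\partial_2 f$ and $\partial_1 g$ live again in the appropriate kernels (so that the ``replica'' fields $f\partial_2$ and $g\partial_1$ again give complete vector fields together with their iterated brackets). The reducedness of the degenerate fibre enters crucially here: because the fibre has multiplicity one, the natural candidates for $f$ and $g$ coming from the fibration data vanish only to order one along the singular locus, so the ideal $I$ generated by $\partial_2 f$ and $\partial_1 g$ in $\regular(S)$ will have a manageable zero set.

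The main verification is then that the pair $(\partial_1,\partial_2)$ is \emph{compatible}, which in the Gizatullin-surface setting amounts to showing that the ideal $I$ above is not contained in any proper ideal stable under both LNDs, equivalently that $I = \regular(S)$. Combined with a standard computation of the iterated Lie brackets $[\,a\partial_1,\,b\partial_2\,]$ for $a \in A_1,\, b \in A_2$, this will prove that $\mathrm{Lie}_{\mathrm{alg}}(S)$ contains a submodule of $\mathrm{VF}_{\mathrm{alg}}(S)$ that spans the tangent space at every point of $S$ and is stable under multiplication by sufficiently many regular functions; the Kaliman--Kutzschebauch machinery then upgrades this to $\mathrm{Lie}_{\mathrm{alg}}(S)=\mathrm{VF}_{\mathrm{alg}}(S)$.

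I expect the main obstacle to be the control of the behaviour of the two LNDs over the reduced degenerate fibre and at its singular points inside the total space. A priori $\partial_1$ and $\partial_2$ may become linearly dependent along a curve or at isolated points of $S$ lying over the degenerate fibre, and at such points the bracket calculation that is supposed to yield the full tangent space degenerates. The reducedness hypothesis should keep this degeneracy locus zero-dimensional and tame enough to be absorbed into the ideal $I$, but checking this rigorously from the explicit equations \eqref{eqdef1}--\eqref{eqdef2}, and in particular verifying that no component of the bad locus is simultaneously $\partial_1$- and $\partial_2$-invariant, is the delicate step where the hypothesis on the fibre enters in an essential way.
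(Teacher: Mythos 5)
Your plan hinges on verifying the Kaliman--Kutzschebauch compatibility condition for the pair of locally nilpotent derivations $\partial_1,\partial_2$ coming from the two non-conjugate $\CC^+$-actions, i.e.\ that $\mathop{\mathrm{span}_\CC}\{\ker\partial_1\cdot\ker\partial_2\}$ contains a non-trivial ideal of $\regular(S)$. This is exactly the step that is not available for these surfaces, and it is the reason the paper does not argue this way. The complete algebraic fields one can actually read off from \eqref{eqdef1} and \eqref{eqdef2} in the chart $\varphi_0$ have kernels consisting of functions of $z_0$ alone, respectively of $z_1$ alone, and the paper notes explicitly (after \eqref{eqtrivterms1}, and again in Remark \ref{remccc}) that $\mathop{\mathrm{span}_\CC}\{z_0^{j_0}(z_1-\lambda_1)^{j_1}\}$ contains no non-trivial ideal, neither in the chart nor in $\regular(S)$ --- ``otherwise the density property would follow easily'' from \cite{denscrit}. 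The second $\CC^+$-action guaranteed by \cite{wgraphs} is not given by any tractable formula for a general zigzag (it is not simply ``read off from the equations'' once $d\geq 2$), and you offer no argument why the products of its kernel with $\ker\partial_1$ should span an ideal; the assertion that reducedness of the degenerate fibre forces the ideal $I$ generated by $\partial_2 f$ and $\partial_1 g$ to be all of $\regular(S)$ is precisely the missing content, not a consequence of anything you set up. The paper's way around this obstruction is the chart-by-chart induction of Proposition \ref{propalgmodule}, which uses the Kaliman--Kutzschebauch bracket formula \eqref{eqKKformula} repeatedly along the chain of charts $\varphi_0,\dots,\varphi_d$ to manufacture an $\regular(S)$-submodule inside the Lie algebra generated by complete fields, without ever producing an ideal inside a product of kernels.

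There is a second gap: even granted a compatible pair, the machinery of \cite{denscrit} (Theorem \ref{thmmodulealg}) yields the algebraic density property only for manifolds homogeneous under $\autalg(S)$, and algebraic transitivity is unknown for this class --- the paper proves only the relative algebraic density property with respect to a finite set $A$ (Theorem \ref{thmrelativedensprop}), states that $A=\emptyset$ is open, and recalls Kovalenko's examples \cite{nontrans} showing that algebraic transitivity can genuinely fail for Gizatullin surfaces. So your conclusion $\mathrm{Lie}_{\mathrm{alg}}(S)=\mathrm{VF}_{\mathrm{alg}}(S)$ claims strictly more than what is proved in the paper. The paper sidesteps this by establishing holomorphic transitivity with explicit complete fields (Proposition \ref{proptrans}) and then applying the holomorphic module criterion (Theorem \ref{thmmodulehol}) together with a generating-set argument at a point with $z_0\neq 0$; if you want to salvage your route, you would at least have to replace the algebraic homogeneity input by this holomorphic one and correspondingly weaken your conclusion to the (holomorphic) density property.
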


\begin{cor}
Let $S$ be a smooth Gizatullin surface with a $\CC^+$-action such that its quotient is a $\CC^1$-fibration with a single reduced degenerate fibre. Then $S$ has the density property.
\end{cor}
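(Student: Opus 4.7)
The plan is to reduce the corollary directly to Theorem \ref{thmdensprop} by invoking Dubouloz's embedding/classification result for this subclass of Gizatullin surfaces. The hypothesis ``$\CC^+$-action whose quotient is a $\CC^1$-fibration with a single reduced degenerate fibre'' is precisely the condition under which Dubouloz \cite{Dubouloz} produces the explicit defining equations \eqref{eqdef1} and \eqref{eqdef2}; so once that classification is quoted, the surface $S$ already falls under the scope of the theorem and no further analysis is needed.

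Concretely, I would proceed in the following order. First, I would recall that a $\CC^+$-action on an affine-algebraic surface corresponds to a locally nilpotent derivation, and that the algebraic quotient $S /\!/ \CC^+$ is isomorphic to $\CC^1$ exactly when the associated Makar-Limanov invariant forces the quotient map to be a $\CC^1$-fibration; this is the setting of Dubouloz's construction. Second, I would cite Dubouloz's theorem to conclude that, under the additional condition that the unique degenerate fibre of this fibration is reduced, $S$ admits a closed embedding as the variety cut out by the system \eqref{eqdef1}--\eqref{eqdef2}. Third, I would apply Theorem \ref{thmdensprop} to this presentation of $S$ to obtain the density property.

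The only delicate point is making sure that the hypothesis of the corollary matches the hypothesis of Dubouloz's embedding theorem verbatim: in particular, one must check that the smoothness of $S$ and the reducedness of the single degenerate fibre together match the assumptions under which \eqref{eqdef1} and \eqml{eqdef2} were derived, so that no extra normalisation step is required. Once that bookkeeping is done, the corollary is immediate.

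I expect no genuine obstacle in this corollary: all of the real work has been done in the proof of Theorem \ref{thmdensprop}, and the corollary is essentially a translation between the intrinsic description (a $\CC^+$-action with the stated quotient behaviour) and the extrinsic description (defining equations), a translation that is already available in the literature.
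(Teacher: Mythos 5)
Your proposal is correct and is exactly the paper's (implicit) argument: the hypothesis of the corollary is precisely the setting in which Dubouloz's embedding result yields the defining equations \eqref{eqdef1} and \eqref{eqdef2}, and Theorem \ref{thmdensprop} then applies verbatim. The aside about the Makar-Limanov invariant is unnecessary (the $\CC^1$-fibration quotient is part of the hypothesis, not something to be derived), but it does not affect the validity of the reduction.
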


In particular, these equations comprise all classes of smooth Gizatullin surfaces for which the density property has been established so far, but contain also many new examples of Gizatullin surfaces of any possible boundary divisor for which the density property was not known. Moreover, these are also new examples of complex manifolds that are elliptic in the sense of Gromov, see Section \ref{secgeom} for more details.

We also describe the identity component of the group of holomorphic automorphisms of these surfaces in Theorem \ref{thmautos}.

\begin{definition}\cite{singular}*{Def.~1.1}
Let $X$ be a complex affine-algebraic variety. We say that $X$ has the \emph{algebraic density property relative to a subvariety} $A \subset X$ if $\mathrm{Lie}_{\mathrm{alg}}(X, A)$, the Lie algebra generated by the complete algebraic vector fields on $X$ that vanish in $A$ and $\mathrm{VF}_{\mathrm{alg}}(X, A)$, the Lie algebra of all algebraic vector fields on $X$ that vanish on $A$ satisfy
\[
\mathcal{I}_A^\ell \cdot \mathrm{VF}_{\mathrm{alg}}(X, A) \subseteq \mathrm{Lie}_{\mathrm{alg}}(X, A)
\]
for some $\ell \in \NN_0$, where $\mathcal{I}_A$ denotes the vanishing ideal of $A$.
\end{definition}

\begin{theorem}
\label{thmrelativedensprop}
Let $S$ be a Gizatullin surface that is described by the equations \eqref{eqdef1} and \eqref{eqdef2}. Then $S$ has the relative algebraic density property with respect to a finite set $A \subset S$.
\end{theorem}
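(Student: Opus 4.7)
The plan is to establish the relative algebraic density property by applying a compatible-pair criterion (in the spirit of Kaliman--Kutzschebauch, formulated in the relative setting in \cite{singular}) to two non-conjugate $\CC^+$-actions on $S$. The first action is the one given in the hypothesis; its locally nilpotent derivation $\delta_1$ can be read off directly from Dubouloz's equations \eqref{eqdef1}-\eqref{eqdef2}, and its ring of invariants $K_1 = \ker \delta_1$ is the coordinate ring of the base $\CC^1$ of the fibration. The second action is the non-conjugate $\CC^+$-action whose existence on every Gizatullin surface is guaranteed by Flenner--Kaliman--Zaidenberg \cite{wgraphs}. On the Dubouloz model one should be able to write an explicit second locally nilpotent derivation $\delta_2$ (arising from the involution exchanging the two $0$-curves of the standard zigzag) and identify its kernel $K_2$ in terms of the defining polynomials.

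Next, I would pin down the finite set $A \subset S$. At a generic point the two derivations span the tangent space; the Zariski-closed locus where they fail to do so, together with the singular locus of $S$ and the base points of the two fibrations on the associated completion, is expected to collapse, under the reduced-degenerate-fibre hypothesis, to a finite set, which is our $A$. The role of the reducedness of the degenerate fibre is to prevent the generic orbits of one action from being tangent to orbits of the other along a whole curve, which would otherwise force $A$ to be positive-dimensional.

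With $\delta_1$, $\delta_2$ in hand, the workhorse is the identity $[\delta_1, h \delta_2] = \delta_1(h)\,\delta_2 + h\,[\delta_1,\delta_2]$. Taking $h \in K_2$ (so that $h\delta_2$ is a complete replica of $\delta_2$), and requiring in addition that $h$ vanish on $A$, we obtain elements of $\mathrm{Lie}_{\mathrm{alg}}(S,A)$. Iterating with replicas of both $\delta_1$ and $\delta_2$, and multiplying by invariants vanishing on $A$, one builds up inside $\mathrm{Lie}_{\mathrm{alg}}(S,A)$ the module generated over $(K_1 \cap \mathcal{I}_A)\cdot(K_2 \cap \mathcal{I}_A)$ by $\delta_1$, $\delta_2$, and $[\delta_1,\delta_2]$. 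The last step is to show that this module contains $\mathcal{I}_A^\ell \cdot \mathrm{VF}_{\mathrm{alg}}(S, A)$ for some $\ell \in \NN_0$; this reduces to the algebraic statement that some power of $\mathcal{I}_A$ is contained in the ideal generated by $(K_1 \cap \mathcal{I}_A)\cdot(K_2 \cap \mathcal{I}_A)$ together with $\mathcal{I}_A \cdot K_1$ and $\mathcal{I}_A \cdot K_2$, which in turn follows from the fact that $K_1$ and $K_2$ separate points on $S \setminus A$.

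The main obstacle I anticipate is the explicit construction of the second locally nilpotent derivation $\delta_2$ in Dubouloz's coordinates, since the second $\CC^+$-action exists only abstractly and must be matched with the specific normal form \eqref{eqdef1}-\eqref{eqdef2}; verifying that it is transverse to $\delta_1$ outside a finite set then becomes an explicit polynomial computation. A secondary difficulty is to keep the power $\ell$ finite, and this is precisely where the reduced-degenerate-fibre hypothesis does its essential work.
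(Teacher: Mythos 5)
Your proposal follows the Kaliman--Kutzschebauch compatible-pair strategy applied to the two non-conjugate $\CC^+$-actions, which is not the paper's route, and the two steps you lean on most heavily would fail. First, the second locally nilpotent derivation $\delta_2$ is not merely a technical obstacle you can defer: for a general zigzag it is not explicitly available in Dubouloz's coordinates, and the entire design of the paper's argument (the chain of charts $\varphi_0,\dots,\varphi_d$ of Section \ref{seccharts} and the inductive bracket computation of Proposition \ref{propalgmodule}) exists precisely to avoid ever writing it down; only shear-type complete fields visible in these charts are used. Second, and more decisively, your definition of $A$ cannot work: the locus where two fixed vector fields fail to span the tangent space of an irreducible affine surface is the zero set of a single regular function, hence of pure dimension one whenever it is nonempty. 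Already on the Danielewski surface $xy=p(z)$ with $p$ having simple roots (so the degenerate fibre is reduced), the two standard LNDs $\delta_1 = x\,\partial_z + p'(z)\,\partial_y$ and $\delta_2 = y\,\partial_z + p'(z)\,\partial_x$ are proportional along every curve $\{z=c\}\cap S$ with $p'(c)=0$; so $A$ as you define it is a curve, not a finite set, and reducedness of the fibre does not prevent this. Similarly, the asserted fact that $K_1=\ker\delta_1$ and $K_2=\ker\delta_2$ separate points of $S\setminus A$ is already false for $xy=p(z)$ (two points with the same $x$ and $y$ but different $z$), so your final ideal-containment step is unsupported. The paper itself flags this obstruction: after \eqref{eqtrivterms1} it notes that the span of products of the relevant kernels contains no nontrivial ideal, and Remark \ref{remccc} emphasizes that the compatible-pair ideal does not extend --- otherwise the result would follow directly from \cite{denscrit}.

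The paper's actual proof of Theorem \ref{thmrelativedensprop} is short because all the work is done elsewhere: Proposition \ref{propalgmodule} produces, by induction along the chain of charts using the formula \eqref{eqKKformula} and the complete algebraic fields of Section \ref{seccharts}, a finitely generated $\CC[S]$-submodule $L$ of algebraic vector fields lying inside the Lie algebra generated by complete algebraic fields; transitivity off a finite set comes from Proposition \ref{proptrans}; a generating set at a suitable point is obtained from a shear fixing that point; and the relative algebraic density property with respect to a finite $A$ then follows from the criterion of Kutzschebauch--Leuenberger--Liendo, Theorem \ref{thmrelativemodule}. To rescue your plan you would have to replace ``two fields spanning off a finite set'' by a module-plus-generating-set argument with replicas and conjugates, at which point you are rebuilding exactly this machinery --- and you would still need explicit complete fields, which the charts supply and the abstractly existing second $\CC^+$-action does not.
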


It is an open question whether we can choose $A = \emptyset$ for a smooth Gizatullin surface $S$ described by the equations \eqref{eqdef1} and \eqref{eqdef2}. Counter-examples of smooth Gizatullin surfaces where the action of $\autalg$ is not transitive are known due to Kovalenko \cite{nontrans}, but do not fall into this class.

\smallskip
Given Remark \ref{remccc} regarding the \emph{charming chain of charts} we conjecture that any Gizatullin surface (note that this excludes $\CC^\ast \times \CC^\ast$) that admits a transitive action of its group of algebraic automorphisms has the algebraic density property.

In this context we also mention that a larger class of complex affine-algebraic surfaces,  called \emph{generalized Gizatullin surfaces},  has been introduced by Kaliman, Kutzschebauch and Leuenberger \cite{KaKuLeu}. They could also provide potential candidates with the algebraic density property although no such examples with the algebraic density property are known.

\bigskip
The article is organized as follows: In the next section we recall the defining equations. Using those equations, we find large affine charts that cover the surfaces  and $\CC$-complete algebraic vector fields and their description in these charts in Section \ref{seccharts}. As a first step towards the density property, we show that the group of holomorphic automorphisms acts transitively in Section \ref{sectrans}. We can also derive directly the so-called holomorphic flexibility (see there). After these preparations we can finally prove the density property. The main ingredients are the calculations in Section \ref{secsubmod}. In Section \ref{secdensity} we provide the theoretical background and in Section \ref{secgeom} we discuss some geometric consequences.

\section{Defining Equations}

According to [Dub04, Section 5.2], all generalized Danielewski surfaces with a boundary divisor of length $d+1$ can be described by the following equations in $\CC[z_0, z_1, \dots z_d, z_{d+1}]$
\begin{align}
\label{eqdef1}
z_0 z_{i + 1} &= \left( \prod_{k=1}^{i-1} \tilde{P}_k(z_k) \right) \cdot P_i(z_i) & 1 \leq i \leq d \\
\label{eqdef2}
(z_{j-1} - \lambda_{j-1}) z_{i+1} &= z_j \cdot \left( \prod_{k=j}^{i-1} \tilde{P}_k(z_k) \right) \cdot P_i(z_i) & 2 \leq j \leq i \leq d
\end{align}
where we use the convention that an empty product is equal to $1$. 
The functions $P_i, \tilde{P}_i \colon \CC \to \CC$ are polynomials in one variable and satisfy the following relation:
\begin{align}
\label{defpolyrel}
P_i(z_i) = (z_i - \lambda_{i}) \cdot \tilde{P}_i(z_i)
\end{align}
We will call $\lambda_i$ the \emph{distinguished root} of $P_i$.
These polynomials $\tilde{P}_i$ are required to have simple roots. For our calculations, this will however not matter and we will consider all possible surfaces described by these equations.

\smallskip
We may rewrite the equations \eqref{eqdef1} and \eqref{eqdef2} as follows:
\begin{align}
\label{eqdef3}
z_0 z_{i + 1} &= (z_i - \lambda_i) \cdot \left( \prod_{k=1}^{i} \tilde{P}_k(z_k) \right) & 1 \leq i \leq d \\
\label{eqdef4}
(z_{j-1} - \lambda_{j-1}) z_{i+1} &= z_j \cdot (z_i - \lambda_i) \cdot \left( \prod_{k=j}^{i} \tilde{P}_k(z_k) \right) & 2 \leq j \leq i \leq d
\end{align}
%where the last equation can be rewritten again to
%\begin{align*}
%z_{j-1} z_{i+1} &= z_j \cdot z_i \cdot \left( \prod_{k=j}^{i} \tilde{P}_k(z_k) \right) & 2 \leq j \leq i \leq d
%\end{align*}
%if we add $\lambda_j/\lambda_i$ times $z_{i+1} = z_j \frac{\lambda_i}{\lambda_j} \left( \prod_{k=j}^{i} \tilde{P}_k(z_k) \right)$. (???)
Division of \eqref{eqdef3} by \eqref{eqdef4} and multiplication by $z_j (z_{j-1} - \lambda_j)$ yields
\begin{equation}
\label{eqdef5}
 z_0 z_j  = (z_{j-1} - \lambda_j) \cdot \left( \prod_{k=1}^{j-1} \tilde{P}_k(z_k) \right) = (z_{j-1} - \lambda_j) \cdot \left( \prod_{k=1}^{j-1} \hat{P}_k(z_k - \lambda_k) \right) 
\end{equation}
Whether we consider the polynomials $\tilde{P}_k$ as functions of $z_k$ or $z_k - \lambda_k$ does of course not matter. We thus obtain the set of equations given in  \cite{Dubouloz}*{Theorem 4.3} after appropriate shifts and renumbering of coordinates. For our purpose, it will be more convenient to work with the equations \eqref{eqdef1} and \eqref{eqdef2}.
\newpage
\section{Large affine charts}
\label{seccharts}

It will turn out to be convenient to work with the following, large affine charts. Their definition emerges directly from the defining equations \eqref{eqdef1} and \eqref{eqdef2}.

\smallskip

Let $\varphi_0 \colon \CC_{z_0}^\ast \times \CC_{z_1} \to S$ be the local parametrization given by
\begin{align*}
z_2 &= \frac{P_1(z_1)}{z_0} \\
z_3 &= \frac{P_1(z_1)}{z_1 - \lambda_1} \frac{P_2(z_2)}{z_0} = \tilde{P}_1(z_1) \frac{P_2(z_2)}{z_0} \\
\vdots \quad &= \quad \vdots \\
z_{d+1} &= \frac{P_1(z_1)}{z_1 - \lambda_1} \frac{P_2(z_2)}{z_2 - \lambda_2} \cdots \frac{P_{d-1}(z_{d-1})}{z_{d-1} - \lambda_{d-1}} \frac{P_d(z_d)}{z_0} \\
&= \tilde{P}_1(z_1) \tilde{P}_2(z_2) \cdots \tilde{P}_{d-1}(z_{d-1}) \frac{P_d(z_d)}{z_0}
\end{align*} 
The equations are to be understood s.t.\ all $z_2, \dots, z_{d+1}$ are replaced inductively by expressions in $z_0$ and $z_1$. The important observation is that the other coordinates of points in $S$ are completely determined by $(z_0, z_1) \in \CC^\ast \times \CC$ if $z_0 \neq 0$.

%Let $\varphi_1 \colon \CC_{z_1-\lambda_1}^\ast \times \CC_{z_2}^\ast \to S$ be a local parametrization given by
%\begin{align*}
%z_0 &= \frac{P_1(z_1)}{z_2} \\
%z_3 &= \frac{z_2 P_2(z_2)}{z_1 - \lambda_1} \\
%\vdots \quad &= \quad \vdots \\
%z_{d+1} &= \tilde{P}_2(z_2) \cdots \tilde{P}_{d-1}(z_{d-1}) \frac{z_2 P_d(z_d)}{z_1 - \lambda_1} \\
%\end{align*} 
%
%
%Let $\varphi_2 \colon \CC_{z_2-\lambda_2}^\ast \times \CC_{z_3}^\ast \to S$ be a local parametrization given by
%\begin{align*}
%z_0 &= \frac{\tilde{P}_1(z_1) P_2(z_2)}{z_3} \\
%z_1 &= \lambda_1 + \frac{z_2 P_2(z_2)}{z_3}\\
%z_4 &= \frac{z_3 P_2(z_3)}{z_2 - \lambda_2} \\
%\vdots \quad &= \quad \vdots \\
%z_{d+1} &= \tilde{P}_3(z_3) \cdots \tilde{P}_{d-1}(z_{d-1}) \frac{z_3 P_d(z_d)}{z_2 - \lambda_2} \\
%\end{align*}

%Let $\varphi_3 \colon \CC_{z_3-\lambda_3}^\ast \times \CC_{z_4}^\ast \to S$ be a local parametrization given by
%\begin{align*}
%z_0 &= \frac{\tilde{P}_1(z_1) \tilde{P}_2(z_2) P_3(z_3)}{z_4} \\
%z_1 &= \lambda_1 + \frac{z_2 \tilde{P}_2(z_2) P_3(z_3)}{z_4}\\
%z_2 &= \lambda_2 + \frac{z_3 P_3(z_3)}{z_4}\\
%z_5 &= \frac{z_4 P_4(z_4)}{z_3 - \lambda_3} \\
%\vdots \quad &= \quad \vdots \\
%z_{d+1} &= \tilde{P}_4(z_4) \cdots \tilde{P}_{d-1}(z_{d-1}) \frac{z_4 P_d(z_d)}{z_3 - \lambda_3} \\
%\end{align*}

\smallskip

For each $i=1, \dots, d$, let $\varphi_i \colon \CC_{z_i-\lambda_i}^\ast \times \CC_{z_{i+1}}^\ast \to S$ be the local parametrization given by
\begin{align*}
z_0 &= \frac{\tilde{P}_1(z_1) \tilde{P}_2(z_2) \cdots \tilde{P}_{i-1}(z_{i-1})  P_i(z_i)}{z_{i+1}} \\
z_1 &= \lambda_1 + \frac{z_2 \tilde{P}_2(z_2) \tilde{P}_{i-1}(z_{i-1}) P_i(z_i)}{z_{i+1}}\\
z_2 &= \lambda_2 + \frac{z_3 \tilde{P}_3(z_3) \tilde{P}_{i-1}(z_{i-1}) P_i(z_i)}{z_{i+1}}\\
\vdots &= \vdots \\
z_{i-1} &= \lambda_{i-1} + \frac{z_i P_i(z_i)}{z_{i+1}}\\
z_{i+2} &= \frac{z_{i+1} P_{i+1}(z_{i+1})}{z_i - \lambda_i} \\
\vdots \quad &= \quad \vdots \\
z_{d+1} &= \tilde{P}_{i+1}(z_{i+1}) \cdots \tilde{P}_{d-1}(z_{d-1}) \frac{z_{i+1} P_d(z_d)}{z_i - \lambda_i} \\
\end{align*}

\begin{remark}
The finitely many affine charts $\varphi_0(\CC^\ast \times \CC), \; \varphi_1(\CC^\ast \times \CC^\ast), \dots, \varphi_{d}(\CC^\ast \times \CC^\ast)$ are Zariski-open, hence dense, sets in $S$. The inverse maps are given by projections to the corresponding coordinate plane, $\varphi_i^{-1}(z_0, z_1, \dots, z_d, z_{d+1}) = (z_i, z_{i+1})$.
\end{remark}

\begin{lemma}
The surface $S$ is covered by the charts $\varphi_0, \varphi_1, \dots, \varphi_d$ if at least one of the distinguished roots $\lambda_1, \dots, \lambda_d$ is non-zero.

Otherwise, if all of them vanish, the zero point in $\CC_{z_0, \dots, z_{d+1}}^{d+2}$ is the only point that is not contained in any of these charts.
\end{lemma}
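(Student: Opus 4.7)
The plan is to translate the condition ``$p \in S$ lies in none of the charts $\varphi_0, \ldots, \varphi_d$'' into explicit coordinate equations and then to exploit the defining relations \eqref{eqdef1}--\eqref{eqdef2}. Since the inverse of $\varphi_i$ is the projection $p \mapsto (z_i, z_{i+1})$ and the domain of $\varphi_i$ is $\CC^\ast_{z_i-\lambda_i} \times \CC^\ast_{z_{i+1}}$ for $i \geq 1$, respectively $\CC^\ast_{z_0} \times \CC_{z_1}$ for $i=0$, the point $p = (z_0,\ldots,z_{d+1}) \in S$ lies in none of the charts exactly when
\[
z_0 = 0 \qquad \text{and, for each } 1 \leq i \leq d, \quad z_i = \lambda_i \text{ or } z_{i+1} = 0.
\]
Hence the lemma reduces to an analysis of which tuples $(0,z_1,\ldots,z_{d+1})$ satisfying these exclusion disjunctions can actually lie on $S$.

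Substituting $z_0 = 0$ into the reformulated equations \eqref{eqdef3}--\eqref{eqdef4} yields
\[
(z_i - \lambda_i)\prod_{k=1}^{i}\tilde P_k(z_k) = 0, \qquad (z_{j-1}-\lambda_{j-1})\,z_{i+1} = z_j(z_i-\lambda_i)\prod_{k=j}^{i}\tilde P_k(z_k),
\]
for all relevant indices. I then proceed by case analysis on which disjunct holds at each $i$. In the case where all $\lambda_i = 0$, I plan to pick the smallest $i_0$ with $z_{i_0} \neq 0$ (supposing it exists) and argue that the exclusion at $i_0$ forces $z_{i_0+1}=0$; this vanishing cascades through \eqref{eqdef4} applied with $j=2$ (using $z_1=\cdots=z_{i_0-1}=0$) to give $z_{i_0+2}=\cdots=z_{d+1}=0$, while \eqref{eqdef3} at $i=i_0$ together with the exclusions at lower indices collapses to an obstruction forcing $z_{i_0}=0$ and contradicting the choice of $i_0$. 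Consequently every coordinate vanishes and $p$ is the origin. In the remaining case, when some $\lambda_{i_\ast} \neq 0$, a symmetric backward-induction argument shows that $z_{i_\ast}=\lambda_{i_\ast}\neq 0$ forces $z_{j}=\lambda_{j}$ for all $j<i_\ast$ (via the exclusions, since $z_{j+1}\neq 0$ at each step), and the forward cascade in \eqref{eqdef4} coupled to this backward chain produces a contradiction in the remaining polynomial identity, ruling out such a point entirely.

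The main obstacle is managing the $2^d$ disjunctive branches arising from the exclusion conditions $z_i = \lambda_i$ versus $z_{i+1} = 0$. The key observation that keeps the analysis tractable is that a single vanishing $z_{i+1}$ propagates forward through \eqref{eqdef4} to kill all higher coordinates (provided the corresponding $z_{j-1}-\lambda_{j-1}$ factors remain nonzero), while the exclusion disjunctions with a nonzero $\lambda$ propagate equalities $z_j = \lambda_j$ backward through lower indices; these two mechanisms collapse the combinatorial analysis to the two extremal configurations treated above and leave only the origin as a possible uncovered point, and only in the all-zero case.
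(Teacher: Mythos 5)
Your opening reduction is the literal and correct one: a point of $S$ misses all the charts precisely when $z_0=0$ and, for every $1\le i\le d$, either $z_i=\lambda_i$ or $z_{i+1}=0$. (The paper's own proof instead extracts from each chart $\varphi_i$ the conjunction ``$z_i=\lambda_i$ and $z_{i+1}=0$''; your disjunctive reading is where the two arguments part ways, and it is exactly what makes your task harder.) The gap is that the core of your argument is only announced, and its key claims do not follow from the equations. In the case of vanishing $\lambda_i$'s, with $z_0=0$ and $z_1=\dots=z_{i_0-1}=0$, equation \eqref{eqdef3} at $i=i_0$ reads $0=z_{i_0}\,\tilde P_1(0)\cdots\tilde P_{i_0-1}(0)\,\tilde P_{i_0}(z_{i_0})$, which only forces $z_{i_0}\tilde P_{i_0}(z_{i_0})=0$ (and not even that, if some $\tilde P_k(0)=0$); it does not force $z_{i_0}=0$. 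Concretely, if $\mu\neq 0$ is a root of $\tilde P_{i_0}$, the tuple with $z_{i_0}=\mu$ and all other coordinates equal to $0$ satisfies every equation in \eqref{eqdef1} and \eqref{eqdef2}, since each side acquires a vanishing factor such as $P_i(0)$, $P_{i_0}(\mu)$ or $\tilde P_{i_0}(\mu)$; moreover it lies on $S$, because it is the limit of $\varphi_{i_0}(\mu,s)\in S$ as $s\to 0$ and $S$ is closed, while it lies in none of the charts. So the ``obstruction forcing $z_{i_0}=0$'' is not there, and no contradiction can be produced from the equations alone. Your minimal-index scheme also omits the case $z_1=\dots=z_d=0$, $z_{d+1}\neq 0$, for which there is no exclusion ``at $i_0$'' and which again satisfies all the equations when every $\lambda_i$ vanishes.

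The same problem defeats your second case: taking $z_0=0$, $z_k=\lambda_k$ for $k<i_0$, $z_{i_0}=\mu$ with $\tilde P_{i_0}(\mu)=0$, $\mu\neq\lambda_{i_0}$, and all later coordinates $0$, one checks that all equations hold and that this point is again the limit of $\varphi_{i_0}(\mu,s)$ as $s\to 0$, so the asserted ``contradiction in the remaining polynomial identity'' does not materialize. In short, at $z_0=0$ the defining equations only pin coordinates to the root sets of the polynomials $P_k$, $\tilde P_k$, never to the specific values $0$ or $\lambda_k$, so the forward and backward propagation mechanisms you describe cannot collapse the case analysis as claimed. As written, the proposal does not prove the lemma; any repair must use more than the bare equations \eqref{eqdef1} and \eqref{eqdef2}, and this is precisely the point that the paper's very short conjunctive argument passes over.
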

\begin{proof}
Assume that these charts do not cover $S$, then by the chart $\varphi_0$ we have $z_0 = 0$ and by each other chart $\varphi_i$, $i=1, \dots, d$, we have $z_i = \lambda_i$ and $z_{i+1} = 0$. Therefore, this case can only occur if $z_0 = 0, \dots, z_{d+1} = 0$ and $\lambda_1 = 0, \dots, \lambda_d = 0$.
\end{proof}

\begin{lemma}
If the surface $S$ is smooth, then at least one of the distinguished roots $\lambda_1, \dots, \lambda_d$ must be non-zero. Otherwise, the zero point is the only singularity of $S$.
\end{lemma}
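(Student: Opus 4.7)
The plan is to dispose of the two claims in turn. The second claim -- that any singular point of $S$ must be the origin, and this can only happen when every $\lambda_i$ vanishes -- follows directly from the preceding lemma. Indeed, every point of $S$ outside of the origin lies in the image of some chart $\varphi_i$, and each $\varphi_i$ is a holomorphic open embedding from $\CC^\ast \times \CC$ (for $i=0$) or from $\CC^\ast \times \CC^\ast$ (for $i \ge 1$); since the source is smooth, every point in the image is a smooth point of $S$. Moreover, by the previous lemma the origin is only excluded from the chart cover when all $\lambda_i = 0$, so in the case that some $\lambda_i$ is non-zero there are no candidate singularities at all.

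For the first claim, I would assume $\lambda_1 = \cdots = \lambda_d = 0$ and show that $S$ is singular at the origin by inspecting the Jacobian of the defining equations \eqref{eqdef1} and \eqref{eqdef2} there. Because every $\lambda_i$ vanishes, each monomial appearing in an equation of \eqref{eqdef2} -- which takes the form $z_{j-1} z_{i+1} = z_j z_i \tilde P_i(z_i) \prod_{k=j}^{i-1} \tilde P_k(z_k)$ -- is a product of at least two distinct coordinates among $z_1, \ldots, z_{d+1}$, so each such equation has vanishing linear part at the origin and contributes nothing to the Jacobian there. The equations \eqref{eqdef1}, which reduce to $z_0 z_{i+1} = z_i \tilde P_i(z_i) \prod_{k=1}^{i-1} \tilde P_k(z_k)$, contribute at the origin exactly the $d$ linear forms $-\bigl(\prod_{k=1}^{i} \tilde P_k(0)\bigr)\, z_i$ for $i = 1,\dots,d$. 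Whenever some $\tilde P_k(0) = 0$, at least one of these linear forms vanishes and the Zariski tangent space at the origin has dimension at least $3$, immediately giving the singularity.

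The main obstacle is the borderline case in which $\prod_{k=1}^i \tilde P_k(0) \neq 0$ for every $i$, since then the naive Jacobian rank is exactly $d$ and the tangent space is the $2$-plane spanned by the $z_0$- and $z_{d+1}$-axes. Here I would pass to a finer analysis -- either by computing the tangent cone from the next-order terms of \eqref{eqdef2} (whose quadratic parts $z_{j-1} z_{i+1} - \bigl(\prod_{k=j}^{i} \tilde P_k(0)\bigr)\, z_i z_j$ produce non-trivial conic relations on the candidate tangent plane $\{z_1=\dots=z_d=0\}$), or by exhibiting two distinct analytic branches of $S$ converging to the origin along curves from different charts $\varphi_i$, $\varphi_{i'}$. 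In either formulation the key point is to show that the local ring of $S$ at the origin fails to be regular, forcing the singularity. I expect this second-order step to be the most delicate part of the argument.
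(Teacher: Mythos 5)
Your treatment of the second claim is fine and is exactly the paper's: every point other than the origin lies in one of the charts $\varphi_i$, whose inverses are coordinate projections, so all candidate singularities are concentrated at $0$, and this only matters when all $\lambda_i$ vanish. Your computation of the linear parts at the origin is also correct: every equation of \eqref{eqdef2} has vanishing differential there, while the $i$-th equation of \eqref{eqdef1} contributes the linear form $-\bigl(\prod_{k=1}^{i}\tilde P_k(0)\bigr) z_i$, so the case in which some $\tilde P_k(0)=0$ is handled as you say (modulo the routine point that the listed equations cut out $S$ near $0$).

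The gap is the borderline case you yourself isolate, and it cannot be closed along the lines you sketch. First, the quadratic parts of \eqref{eqdef2} impose no conic relation on the candidate tangent plane: every monomial of $z_{j-1}z_{i+1}-\bigl(\prod_{k=j}^{i}\tilde P_k(0)\bigr) z_i z_j$ contains a factor $z_m$ with $1\le m\le d$ (since $1\le j-1$ and $i,j\le d$), so these quadrics restrict to zero on $\{z_1=\cdots=z_d=0\}$. Second, and more seriously, no finer analysis can produce a singularity there: if $\prod_{k=1}^{i}\tilde P_k(0)\neq 0$ for all $i$, the Jacobian of the $d$ equations \eqref{eqdef1} with respect to $(z_1,\dots,z_d)$ at the origin is diagonal with nonzero entries, so by the implicit function theorem their common zero set is, near the origin, a single smooth graph over the $(z_0,z_{d+1})$-plane; the equations \eqref{eqdef2} hold identically on this graph, because they follow from \eqref{eqdef1} wherever $z_0\neq 0$ (this is exactly the chart $\varphi_0$) and the locus $z_0\neq 0$ is dense in the graph. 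Hence the surface is \emph{smooth} at the origin in this case --- a concrete instance is $d=2$, $\lambda_1=\lambda_2=0$, $P_1(z)=P_2(z)=z(z-1)$ --- so the singularity you are trying to exhibit is simply not there, and the ``two branches'' alternative fails for the same reason. In the generality in which the lemma is stated (``all possible surfaces described by these equations'') your argument can therefore only be completed under the additional hypothesis that $\tilde P_k(0)=0$ for some $k$, i.e.\ that the distinguished root $0$ is a multiple root of some $P_k$. For comparison, the paper's own proof dismisses this case with the remark that the surviving $\partial/\partial z_i$-derivatives ``could only guarantee smoothness if the codimension were $1$''; your own rank count (one independent differential for each of the $d$ equations of \eqref{eqdef1}, matching the codimension $d$) shows that this remark does not dispose of the borderline case either. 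So you have correctly located the delicate point, but the proposed repair rests on a false computation and the intended conclusion fails there in the stated generality.
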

\begin{proof}
Assume to get a contradiction that all distinguished roots vanish. By the preceeding lemma, we already know that the surface is smooth outside the zero point.

We consider the derivatives of the defining equations and observe that in this case, all derivatives of \eqref{eqdef2} vanish in the zero point, since $P_i(z_i) = z_i \cdot \tilde{P}_i(z_i)$ on the right-hand side. Similarly, all but one derivatives of  \eqref{eqdef3} vanish in the zero point. The only potentially (if $P_i$ has simple roots) non-vanishing derivative is the $\frac{\partial}{\partial z_i}$-derivative. However, this could only guarantee smoothness if the co-dimension of $S$ in $\CC^{d+2}$ is $1$, i.e.\ if $d=0$, and such a Gizatullin surface does not exist.
\end{proof}

\begin{lemma}
\label{lemzerochart}
For all $j \in \NN_0$
\begin{equation}
\label{eqcompletezerochart1}
{\varphi_0}_\ast \left( z_0^{j+d} \cdot \frac{\partial}{\partial z_1} \right)
\end{equation}
and
\begin{equation}
\label{eqcompletezerochart2}
{\varphi_0}_\ast \left( z_0^{j+d-1} \cdot (z_1 - \lambda_1) \cdot \frac{\partial}{\partial z_1} \right)
\end{equation}
and
\begin{equation}
\label{eqcompletezerochart3}
{\varphi_0}_\ast \left( (z_1-\lambda_1)^{j+d-1} \cdot z_0 \cdot \frac{\partial}{\partial z_0} \right)
\end{equation}
extend to complete holomorphic vector fields on $S$.
\end{lemma}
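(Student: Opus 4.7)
My plan is to verify, for each of the three candidate vector fields, two properties: first, that the pushforward under $\varphi_0$ extends to an algebraic (hence holomorphic) vector field on $S$; second, that this extension is complete.

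To extend each field across the locus $\{z_0 = 0\}$, I would compute its action on every coordinate $z_k$ inductively, using the defining equation $z_0 z_{k+1} = (z_k - \lambda_k) \prod_{\ell=1}^{k} \tilde{P}_\ell(z_\ell)$ from \eqref{eqdef3} together with the Leibniz rule. The key bookkeeping concerns the power of $z_0$ carried by $V(z_\ell)$. For \eqref{eqcompletezerochart1} an induction shows that $V_j(z_\ell)$ is divisible by $z_0^{j+d-\ell+1}$, a nonnegative power for every $\ell \leq j+d+1$; since we need this only for $\ell \leq d+1$ and $j \geq 0$, the division by $z_0$ arising in the recursion $V_j(z_{k+1}) = V_j(\text{RHS})/z_0$ always produces a polynomial, so $V_j$ preserves $\CC[S]$. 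The chosen exponents $z_0^{j+d-1}$ and $(z_1-\lambda_1)^{j+d-1}$ in the second and third fields are exactly the minimal powers needed to cancel the poles appearing through the chart formulas of $\varphi_0$; completely analogous inductions verify extension.

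For completeness the argument splits by type. The first vector field $V_j$ is locally nilpotent: $V_j(z_0) = 0$, and iterated application $V_j^n(z_k)$ produces higher derivatives of the polynomials $P_\ell$, which vanish once $n$ exceeds $\deg P_\ell$. Hence $V_j$ generates an algebraic $\CC^+$-action and is automatically complete. The other two fields are semi-simple rather than nilpotent; for them I would invoke the standard first-integral trick, writing $W_j = z_0^{j} \cdot W_0$ with $W_0 = z_0^{d-1}(z_1 - \lambda_1)\partial/\partial z_1$ and noting $W_0(z_0) = 0$, so completeness of $W_j$ reduces to that of $W_0$; similarly $U_j = (z_1-\lambda_1)^{j} \cdot U_0$ with $U_0(z_1-\lambda_1) = 0$. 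The flow of $W_0$ on $\varphi_0$ is given explicitly by $(z_0, z_1) \mapsto (z_0, \lambda_1 + (z_1 - \lambda_1) e^{t z_0^{d-1}})$ for all $t \in \CC$; since $W_0$ is already shown to be algebraic, the Taylor series $\Phi_t^{\ast}(z_k) = \sum_{n \geq 0} (t^{n}/n!)\, W_0^{n}(z_k)$ has coefficients in $\CC[S]$ and assembles into a holomorphic $\CC$-action on $S$ that restricts to the identity on $\{z_0 = 0\}$. The symmetric argument handles $U_0$ and \eqref{eqcompletezerochart3}.

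The chief delicacy lies in showing that the transcendental flow of $W_0$ (and, symmetrically, $U_0$) extends as a globally defined holomorphic map from the dense chart $\varphi_0$ across its boundary into the other charts $\varphi_1, \dots, \varphi_d$. Because the flow is not polynomial in $t$, the straightforward algebraic extension argument used for $V_j$ no longer suffices; instead one needs to verify in each chart $\varphi_i$ ($i \geq 1$) that the explicit formula for $\Phi_t$ yields a holomorphic expression in the local coordinates $(u_i, v_i)$, using that $W_0$ vanishes on $\{z_0 = 0\}$ (since $d \geq 2$) and that the exponent $t z_0^{d-1}$ accordingly has a zero of sufficient order on this locus to keep the induced map regular. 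Once this verification is carried out, the three complete holomorphic vector fields claimed by the lemma are in hand.
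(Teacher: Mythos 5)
Your overall strategy (inductive bookkeeping of poles for the extension, explicit flows and kernel multiples for completeness) follows the same lines as the paper, but two essential steps are missing. First, the extension of \eqref{eqcompletezerochart2} and \eqref{eqcompletezerochart3} is \emph{not} ``completely analogous'' to that of \eqref{eqcompletezerochart1}, and your divisibility count only covers the first field. For \eqref{eqcompletezerochart1} the prefactor $z_0^{j+d}$ indeed dominates the $k-1$ explicit powers of $1/z_0$ occurring in the component along $z_k$, $k\le d+1$. But \eqref{eqcompletezerochart2} carries only $z_0^{j+d-1}$, which for $j=0$ is one power short of the $d$ powers of $1/z_0$ in the component along $z_{d+1}$, and \eqref{eqcompletezerochart3} carries a single power of $z_0$ while $z_0\,\partial z_k/\partial z_0$ still involves up to $d-1$ powers of $1/z_0$ (with coefficients regular on $S$). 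The deficit is made up only by converting factors $(z_1-\lambda_1)$ into powers of $z_0$ via the defining relations, e.g.\ $(z_1-\lambda_1)\tilde P_1(z_1)=P_1(z_1)=z_0z_2$ and more generally \eqref{eqdef2}; this trade of $(z_1-\lambda_1)$ against $z_0$ is precisely the computation that forms the bulk of the paper's proof, and in your proposal it is only asserted (``minimal powers needed to cancel the poles''), not carried out.

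Second, completeness of \eqref{eqcompletezerochart2} and \eqref{eqcompletezerochart3} is not actually delivered. The reduction $W_j=z_0^{j}W_0$ and $U_j=(z_1-\lambda_1)^{j}U_0$ via kernel elements is fine, but the two claims you then lean on are unjustified: the assertion that the flow ``restricts to the identity on $\{z_0=0\}$'' uses only that the $z_0$- and $z_1$-components of $W_0$ vanish there, whereas the holomorphic extension also has components along $z_2,\dots,z_{d+1}$, and these are in general nonzero on the degenerate fibre (already for $d=2$, $j=0$ the $z_2$-component of \eqref{eqcompletezerochart2} equals $(z_1-\lambda_1)P_1'(z_1)$, which does not vanish on the fibre components over the non-distinguished roots of $P_1$); and the convergence and globality of the exponential series is exactly the completeness statement to be proved, not a consequence of algebraicity. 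You concede this yourself by labelling the extension of the transcendental flow across the boundary of the chart a ``chief delicacy'' that is left unverified, so for these two fields the argument stops short of its goal. Note also that this chart-by-chart extension of the flow is unnecessary: as in the paper, once the \emph{vector field} is shown to extend holomorphically, it suffices to use that $S$ is the union of the invariant chart $\varphi_0(\CC^\ast\times\CC)$, where the explicit flow is entire in $t$, and the invariant curve $S\cap\{z_0=0\}$, whose (very simple) induced dynamics is checked directly; no holomorphy of the flow formula in the charts $\varphi_1,\dots,\varphi_d$ has to be verified. (Your locally nilpotent argument for \eqref{eqcompletezerochart1} is correct and is, if anything, a cleaner way to handle that field than the flow argument.)
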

\begin{proof}
The completeness of these vector fields in the chart is clear, since the flow maps are given by
\begin{align*}
((z_1, z_2), t) &\mapsto (z_0, z_1 + t z_0^{j+d}) \\
((z_1, z_2), t) &\mapsto (z_0, \exp(t z_0^{j+d-1} ) \cdot (z_1 - \lambda_1) + \lambda_1) \\
((z_1, z_2), t) &\mapsto (\exp(t (z_1-\lambda_1)^{j+d-1}) \cdot z_0, z_1)
\end{align*}
If they extend holomorphically to the whole surface, then they extend as complete  vector fields, because they vanish in the complement of the chart.
We calculate the following derivatives:
\begin{align*}
\frac{\partial z_2}{\partial z_1} &= \frac{P^\prime(z_1)}{z_0} \\
\frac{\partial z_k}{\partial z_1} &= \frac{1}{z_0} \Bigg( \sum_{\ell=1}^{k-2} \tilde{P}_1(z_1) \cdots \widehat{\tilde{P}_{\ell}(z_{\ell})} \cdots \tilde{P}_{k-2}(z_{k-2}) P_{k-1}(z_{k-1}) \cdot \tilde{P}_\ell^\prime(z_\ell) \cdot \frac{\partial z_\ell}{\partial z_1} \\
&\quad + \tilde{P}_1(z_1) \cdots \tilde{P}_{k-2}(z_{k-2}) \cdot P_{k-1}^\prime(z_{k-1}) \cdot \frac{\partial z_{k-1}}{\partial z_1} \Bigg) 
\end{align*}
By induction in $k = 2, \dots, d+1$ we see that with each step, we raise the power in $\frac{1}{z_0}$ by one. We may however, e.g.\ for $z_3$, reduce the power $\frac{1}{z_0}$ in exchange for a power in $\frac{1}{z_1 - \lambda_1}$: 
\begin{align*}
\frac{\partial z_3}{\partial z_1} &= \frac{1}{z_0} \Bigg( \tilde{P}_1^\prime(z_1) \cdot P_{2}(z_{2}) + \tilde{P}_1(z_1) \cdot P_{2}^\prime(z_{2}) \cdot \frac{P_1^\prime(z_1)}{z_0} \Bigg) \\
 &= \frac{1}{z_0} \Bigg( \tilde{P}_1^\prime(z_1) \cdot P_{2}(z_{2}) + \frac{z_2}{z_1 - \lambda_1} \cdot P_{2}^\prime(z_{2}) \cdot P_1^\prime(z_1) \Bigg)
\end{align*}
This proves the holomorphic extension of \eqref{eqcompletezerochart1} and \eqref{eqcompletezerochart2}.
 
Next, we calculate the following derivatives:
\begin{align*}
\frac{\partial z_2}{\partial z_0} &= -\frac{P(z_1)}{z_0^2} = -\frac{z_2}{z_0} \\
\frac{\partial z_k}{\partial z_0} &= -\frac{z_k}{z_0} \\ &\quad+ \frac{\tilde{P}_1(z_1)}{z_0} \sum_{\ell=2}^{k-2} \tilde{P}_2(z_2) \cdots \widehat{\tilde{P}_{\ell}(z_{\ell})} \cdots P_{k-1}(z_{k-1}) \cdot \tilde{P}_\ell^\prime(z_\ell) \cdot \frac{\partial z_\ell}{\partial z_0} \\
&\quad + \frac{\tilde{P}_1(z_1)}{z_0} \tilde{P}_2(z_2) \cdots \tilde{P}_{k-2}(z_{k-2}) \cdot P_{k-1}^\prime(z_{k-1}) \cdot \frac{\partial z_{k-1}}{\partial z_0} 
\end{align*}
Now, by induction in $k = 2, \dots, d+1$ we see that with each step, we again raise the power in $\frac{1}{z_0}$ by one.

However, for $k \geq 3$, after multiplication by $(z_1 - \lambda_1)$ we obtain:
\begin{align*}
(z_1 - \lambda_1) \frac{\partial z_k}{\partial z_0} &= -(z_1 - \lambda_1) \frac{z_k}{z_0} \\ &\quad + z_2 \cdot \sum_{\ell=2}^{k-2} \tilde{P}_2(z_2) \cdots \widehat{\tilde{P}_{\ell}(z_{\ell})} \cdots P_{k-1}(z_{k-1}) \cdot \tilde{P}_\ell^\prime(z_\ell) \cdot \frac{\partial z_\ell}{\partial z_0} \\
&\quad + z_2 \tilde{P}_2(z_2) \cdots \tilde{P}_{k-2}(z_{k-2}) \cdot P_{k-1}^\prime(z_{k-1}) \cdot \frac{\partial z_{k-1}}{\partial z_0}
\end{align*}
This proves the holomorphic extension of \eqref{eqcompletezerochart3}.
\end{proof}

\begin{lemma}
Let $i = 1, \dots, d$. Then for all $j \in \NN$
\begin{equation}
\label{eqcompleteotherchart1}
{\varphi_i}_\ast \left( z_{i+1}^{j+d-1} \cdot (z_{i} - \lambda_{i}) \cdot \frac{\partial}{\partial z_{i}} \right)
\end{equation}
and
\begin{equation}
\label{eqcompleteotherchart2}
{\varphi_i}_\ast \left( (z_{i}-\lambda_{i})^{j+d-i-1} z_i^i \cdot z_{i+1} \cdot \frac{\partial}{\partial z_{i+1}} \right)
\end{equation}
extend to complete holomorphic vector fields on $S$.
\end{lemma}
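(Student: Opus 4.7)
The plan is to mirror the proof of Lemma \ref{lemzerochart}: first check $\CC$-completeness of the flow directly on the chart $\varphi_i(\CC^\ast \times \CC^\ast)$, then verify that the push-forwards extend regularly across the missing loci $\{z_i = \lambda_i\}$ and $\{z_{i+1} = 0\}$. Once the extension is in hand, completeness on all of $S$ follows for free, because the prefactors $(z_i - \lambda_i)$ and $z_{i+1}$ force the extended fields to vanish on the complement of the chart, exactly as in the previous lemma.

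Completeness on the chart is immediate, since the coefficient $z_{i+1}^{j+d-1}$ depends only on $z_{i+1}$ and the coefficient $(z_i - \lambda_i)^{j+d-i-1}z_i^i$ only on $z_i$; the two flows read
\begin{align*}
((z_i,z_{i+1}), t) &\mapsto \bigl(\lambda_i + \exp(t z_{i+1}^{j+d-1})(z_i - \lambda_i),\; z_{i+1}\bigr), \\
((z_i,z_{i+1}), t) &\mapsto \bigl(z_i,\; \exp\bigl(t(z_i - \lambda_i)^{j+d-i-1}z_i^i\bigr)\, z_{i+1}\bigr),
\end{align*}
both defined for all $t \in \CC$. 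For the regular extension I would then differentiate the defining formulas of $\varphi_i$ and compute each push-forward coefficient $X z_k = (\text{prefactor})\cdot \partial z_k/\partial z_i$ or $(\text{prefactor})\cdot \partial z_k/\partial z_{i+1}$. The computation splits into two symmetric inductions: downward in $k = i-1,\dots,0$, where each step raises the order of $1/z_{i+1}$ in the denominator by one, and upward in $k = i+2,\dots,d+1$, where each step raises the order of $1/(z_i - \lambda_i)$ by one. The exponents $j + d - 1$ and $j + d - i - 1$, with $j \geq 1$, are calibrated to cancel the worst-case pole arising at the end of each induction.

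The main obstacle will be the same as at the end of Lemma \ref{lemzerochart}: after the prefactor cancels the leading pole, the residual expression must be shown to define a regular function on $S$ rather than merely on the chart. This is handled by redistributing the remaining singularities by means of the defining equations \eqref{eqdef1} and \eqref{eqdef2}, echoing the trade $1/z_0 \leftrightarrow z_2/(z_1 - \lambda_1)$ via $z_0 z_2 = P_1(z_1)$ used there; the analogous substitutions here exchange powers of $1/z_{i+1}$ and $1/(z_i - \lambda_i)$ for combinations of other coordinates $z_k$, using the factorisation $P_k(z_k) = (z_k - \lambda_k)\tilde{P}_k(z_k)$ wherever a $(z_k - \lambda_k)$ is needed to unblock a cancellation. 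A secondary bookkeeping point, specific to the second vector field, is to check that the factor $z_i^i$ supplies just enough vanishing to extend into the neighbouring charts $\varphi_0,\dots,\varphi_{i-1}$, in which $z_i$ itself appears as a rational expression rather than as a coordinate.
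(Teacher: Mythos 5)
Your proposal is correct and follows essentially the same route as the paper: explicit flows give completeness in the chart $\varphi_i$, the push-forward coefficients are shown to be regular via the two pole-counting inductions (downward in $1/z_{i+1}$ for $k \leq i-1$, upward in $1/(z_i-\lambda_i)$ for $k \geq i+2$, the latter being word-for-word the argument of Lemma \ref{lemzerochart}) together with trades coming from the defining relations, and completeness on $S$ then follows because the extended fields vanish off the chart. The only shift in emphasis is that what you call a secondary bookkeeping point --- the factor $z_i^i$ in \eqref{eqcompleteotherchart2} --- is in fact the paper's one genuinely new computation: multiplying by $z_i$ turns the factor $P_i(z_i)/z_{i+1}$ in the downward recursion into a regular coordinate expression, so each power of $z_i$ absorbs one step of pole growth in $1/z_{i+1}$, which is why a single factor of $z_{i+1}$ suffices there.
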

\begin{proof}
We proceed as the proof of Lemma \ref{lemzerochart}. The proof is completely analogous for $z_k$ depending on $(z_{i}, z_{i+1})$ if $k \geq i+2$. We only need to consider the case $k \leq i-1$.
\begin{align*}
\frac{\partial z_{i-1}}{\partial z_{i}} &= -\frac{P(z_i) + z_i P_i^\prime(z_i)}{z_{i+1}} \\
\frac{\partial z_{i-1}}{\partial z_{i+1}} &= -\frac{z_{i-1} - \lambda_{i-1}}{z_{i+1}} \\
\end{align*}
For $1 \leq k \leq i-2$ we obtain
\begin{align*}
\frac{\partial z_{k}}{\partial z_{i}} &= \frac{1}{z_{i+1}} \frac{\partial}{\partial z_{i}} \left( z_{k+1} \tilde{P}_{k+1}(z_{k+1}) \dots \tilde{P}_{i-1}(z_{i-1}) \cdot P_i(z_i) \right) \\
\frac{\partial z_{k}}{\partial z_{i+1}} &= -\frac{z_{k}-\lambda_{k}}{z_{i+1}} \\
 &\quad + \frac{P_i(z_i)}{z_{i+1}} \frac{\partial}{\partial z_{i+1}} \left( z_{k+1} \tilde{P}_{k+1}(z_{k+1}) \dots \tilde{P}_{i-1}(z_{i-1}) \right) 
\end{align*}
The case $k=0$ is completely analogous except that the factor $z_{k+1}$ does not appear on the right-hand side.

Now, by induction in $k = i-1, i-2, \dots, 0$ we see that with each step, we raise the power in $\frac{1}{z_{i+1}}$ by one.

We can multiply by $z_{i}$ and hence exchange the increase of one power in $\frac{1}{z_{i+1}}$ by one power in $\frac{1}{z_{i}}$:
\begin{align*}
z_{i} \frac{\partial z_{k}}{\partial z_{i+1}} &= -z_{i} \frac{z_{k}-\lambda_{k}}{z_{i+1}} \\
 &\quad + (z_i - \lambda_i) \frac{\partial}{\partial z_{i+1}} \left( z_{k+1} \tilde{P}_{k+1}(z_{k+1}) \dots \tilde{P}_{i-1}(z_{i-1}) \right) \qedhere
\end{align*}
\end{proof}

%\begin{lemma}
%Let $i = 1, \dots, d$. Then for all $j \in \NN$ we have that
%\begin{equation}
%\label{eqpullback1}
%\varphi_0^\ast \, {\varphi_i}_\ast \left( z_{i+1}^{j+d-1} \cdot (z_{i} - \lambda_{i}) \cdot \frac{\partial}{\partial z_{i}} \right) = ????
%\end{equation}
%and
%\begin{equation}
%\label{eqpullback2}
%\varphi_0^\ast {\varphi_i}_\ast \left( (z_{i}-\lambda_{i})^{j+d-i-1} \cdot z_i^{i} \cdot z_{i+1} \cdot \frac{\partial}{\partial z_{i+1}} \right) = ????
%\end{equation}
%\end{lemma}

%\newpage
\section{Holomorphic transitivity}
\label{sectrans}
\begin{proposition}
\label{proptrans}
Assume that at least one of the distinguished roots $\lambda_1, \dots, \lambda_d$ is non-zero.

The complete vector fields given (for $j=0$) in \eqref{eqcompletezerochart1}, \eqref{eqcompletezerochart2} and their pull-backs by certain flows of \eqref{eqcompletezerochart1}, and the complete vector fields given (for $j=0$) in \eqref{eqcompleteotherchart1} and \eqref{eqcompleteotherchart2} together span the tangent space of $S$ in every point.
\end{proposition}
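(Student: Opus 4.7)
The plan is to verify transversality pointwise by a case analysis over the covering charts $\varphi_0, \varphi_1, \dots, \varphi_d$, which by the preceding two lemmas cover $S$ under the hypothesis that some $\lambda_j \neq 0$. At each $p \in S$ the tangent space is two-dimensional, so it suffices to exhibit two of the listed complete fields that are linearly independent at $p$.

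On the chart $\varphi_0$, where $z_0 \neq 0$, the field \eqref{eqcompletezerochart1} for $j=0$ equals $z_0^d \, \partial/\partial z_1$ and is nowhere vanishing. Since both \eqref{eqcompletezerochart1} and \eqref{eqcompletezerochart2} are $\partial/\partial z_1$-directional on $\varphi_0$ (and the same is true for every pullback of \eqref{eqcompletezerochart2} by the flow of \eqref{eqcompletezerochart1}, whose $\varphi_0$-expression is merely a shift of the constant $\lambda_1$), the complementary direction must be supplied by pushing forward fields from some $\varphi_i$ with $i \geq 1$. I would take \eqref{eqcompleteotherchart2} and compute its $\varphi_0$-expression via the transition $z_{i+1}=(\text{rational function in } z_0, z_1)$ from Section \ref{seccharts}; the chain rule yields a nontrivial $\partial/\partial z_0$-component away from a proper algebraic subset of $\varphi_0$ determined by the zeros of $z_i, z_i - \lambda_i$ and the polynomials $\tilde{P}_k$. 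Varying $i \in \{1, \dots, d\}$ and using the freedom granted by the hypothesis ensures that at every $p \in \varphi_0$ at least one such push-forward is transverse to $z_0^d\,\partial/\partial z_1$.

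On the chart $\varphi_i$ for $i \geq 1$, the field \eqref{eqcompleteotherchart1} for $j=0$ equals $z_{i+1}^{d-1}(z_i - \lambda_i)\,\partial/\partial z_i$ and is nowhere vanishing in $\varphi_i$, supplying the $\partial/\partial z_i$-direction. The field \eqref{eqcompleteotherchart2} supplies the transverse direction on $\varphi_i \cap \{z_i \neq 0\}$. On the subset $\varphi_i \cap \{z_i = 0\}$, I would push forward \eqref{eqcompletezerochart1} from $\varphi_0$: computing its $\varphi_i$-expression by differentiating the flow $(z_0, z_1) \mapsto (z_0, z_1 + t z_0^d)$ in the $\varphi_i$-coordinates yields a field with a generically nonzero $\partial/\partial z_{i+1}$-component, proportional to the derivatives of the transition polynomials. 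The pullbacks of \eqref{eqcompletezerochart2} by the flow of \eqref{eqcompletezerochart1}, which in $\varphi_0$ read $z_0^{d-1}(z_1 - \lambda_1 - t z_0^d)\,\partial/\partial z_1$, furnish a one-parameter family of complete fields whose transported expressions in the $\varphi_i$-charts mop up the few residual exceptional points where all the preceding fields still degenerate.

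The main obstacle, and the bulk of the work, is the bookkeeping of these exceptional loci across chart transitions: one must verify that, given the polynomial factors $\tilde{P}_k, P_k$ and distinguished roots $\lambda_k$ appearing in the transitions of Section \ref{seccharts}, the intersection of all relevant bad loci is empty. The hypothesis that some $\lambda_j \neq 0$ is essential both for the charts to cover $S$ and to guarantee that the various transversality-providing push-forwards do not simultaneously degenerate on the zero fiber.
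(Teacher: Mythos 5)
Your reading of the statement is literal, and your observation that \eqref{eqcompletezerochart1}, \eqref{eqcompletezerochart2} and their pull-backs by flows of \eqref{eqcompletezerochart1} are all proportional to $\partial/\partial z_1$ in the chart $\varphi_0$ is correct --- but the way you then try to produce the missing $\partial/\partial z_0$-direction, by importing \eqref{eqcompleteotherchart1}, \eqref{eqcompleteotherchart2} from the charts $\varphi_i$, $i \geq 1$, cannot be made to work, and the ``bookkeeping of exceptional loci'' you defer is not just unfinished, it has no favourable outcome. The extensions of \eqref{eqcompleteotherchart1} and \eqref{eqcompleteotherchart2} to $S$ vanish identically on the complement of their chart $\varphi_i$ (this vanishing is exactly what makes the extensions complete in Lemma \ref{lemzerochart} and its analogue), and there are points of $\varphi_0(\CC^\ast \times \CC)$ contained in no other chart. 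For instance, if $\lambda_1 \neq 0$ and $\lambda_2 = \dots = \lambda_d = 0$ (allowed by the hypothesis), the point $(c, \lambda_1, 0, \dots, 0)$ with $c \neq 0$ lies on $S$, lies only in $\varphi_0$, and at this point every field on your list is either zero or proportional to $\partial/\partial z_1$, so your collection spans only a line there. The same defect recurs in your treatment of $\varphi_i \cap \{z_i = 0\}$: the fields \eqref{eqcompletezerochart1}, \eqref{eqcompletezerochart2} and all their pull-backs carry a factor $z_0^{d-1}$ and hence vanish on $\{z_0 = 0\}$, so they cannot ``mop up'' residual points with $z_0 = 0$ either.

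The missing ingredient is the third field of Lemma \ref{lemzerochart}, namely \eqref{eqcompletezerochart3} $= (z_1 - \lambda_1)^{d-1} z_0 \, \partial/\partial z_0$; the citation of \eqref{eqcompletezerochart2} in the statement is evidently a slip for \eqref{eqcompletezerochart3}, and the paper's proof uses the latter. With it the argument is purely chart-local and short: on $\varphi_0$ the fields \eqref{eqcompletezerochart1} and \eqref{eqcompletezerochart3} span wherever $z_1 \neq \lambda_1$, and at a point with $z_1 = \lambda_1$ one pulls both back by the time-$t$ flow $(z_0, z_1) \mapsto (z_0, z_1 + t z_0^d)$ of \eqref{eqcompletezerochart1}, which keeps the point in the chart, preserves $z_0$ and moves it off $\{z_1 = \lambda_1\}$, so the pulled-back \eqref{eqcompletezerochart3} acquires a non-zero $\partial/\partial z_0$-component at that point; on the charts $\varphi_i$ the paper simply takes \eqref{eqcompleteotherchart1} and \eqref{eqcompleteotherchart2} in that same chart. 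In particular no transition formulas between different charts and no analysis of their degeneracy loci are needed, which is precisely the machinery your proposal both relies on and leaves unverified.
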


\begin{proof}
The two vector fields given by \eqref{eqcompletezerochart1} and  \eqref{eqcompletezerochart3} for $j=0$ span the tangent space in every point of the chart $\varphi_0(\CC^\ast_{z_0} \times \CC_{z_1})$ for $z_1 \neq \lambda_1$:
\[
z_0^{d} \frac{\partial}{\partial z_0}
\text{ and }
z_0^{d-1} (z_1 - \lambda_1) \frac{\partial}{\partial z_1}
\]
For a point $(z_0, z_1)$ with $z_1 = \lambda_1$ we just pull back these vector fields by the flow of $z_0^{d} \frac{\partial}{\partial z_0}$.
The two vector fields given by \eqref{eqcompleteotherchart1} and  \eqref{eqcompleteotherchart2} for $j=0$ span the tangent space in every point of the chart $\varphi_i(\CC^\ast_{z_i-\lambda_i} \times \CC^\ast_{z_{i+1}})$.
\end{proof}

\begin{cor}
If $S$ is smooth, then the holomorphic automorphisms act transitively on $S$. If $S$ is singular, then $0$ is the only singularity and the holomorphic automorphisms act transitively on $S \setminus\{0\}$.
\end{cor}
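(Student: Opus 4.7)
The plan is to deduce the corollary from Proposition \ref{proptrans} via the standard open-orbit argument for groups generated by flows of complete holomorphic vector fields. Given a point $p$ in the smooth locus of $S$, Proposition \ref{proptrans} provides two of the listed complete holomorphic vector fields $V_1, V_2$ (possibly after pulling back by a flow, which preserves completeness) whose values at $p$ are linearly independent. Writing $\phi^k_t$ for the global flow of $V_k$ on $S$, the holomorphic map $(t_1,t_2) \mapsto \phi^1_{t_1} \circ \phi^2_{t_2}(p)$ from a neighbourhood of $0 \in \CC^2$ into $S$ has differential at the origin sending the standard basis of $\CC^2$ to $V_1(p), V_2(p)$; hence it is a local biholomorphism onto a neighbourhood of $p$. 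Consequently the orbit of $p$ under the subgroup of $\aut(S)$ generated by these flows contains an open neighbourhood of $p$ and is therefore open.

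In the smooth case this applies at every point of $S$, so every orbit is open. As a smooth irreducible affine variety, $S$ is connected, and hence the partition of $S$ into open orbits collapses to a single orbit; this proves transitivity.

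In the singular case, the lemma preceding Proposition \ref{proptrans} tells us that all distinguished roots vanish and that $0$ is the unique singularity of $S$. Every holomorphic automorphism of $S$ must preserve the singular locus, so each flow $\phi^k_t$ fixes $0$ and restricts to a holomorphic automorphism of the smooth complex manifold $S \setminus \{0\}$, which remains connected because removing a single point from an irreducible complex surface cannot disconnect it. The open-orbit argument then applies verbatim on $S \setminus \{0\}$ and yields transitivity there. No serious obstacle is anticipated; the substantive geometric input has already been absorbed into Proposition \ref{proptrans}, and the remaining work is the routine passage from pointwise spanning by complete vector fields to transitivity of the generated pseudogroup.
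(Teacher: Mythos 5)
Your proposal is correct and follows essentially the route the paper intends: the corollary is stated without proof as an immediate consequence of Proposition \ref{proptrans}, and your open-orbit/connectedness argument (orbits of the group generated by the flows are open, hence also closed, hence everything on a connected manifold) is the standard way to make that deduction precise. The only point worth noting is that in the singular case Proposition \ref{proptrans} as stated assumes some $\lambda_i \neq 0$, so you are really invoking its proof (the charts $\varphi_1, \dots, \varphi_d$ still cover $S \setminus \{0\}$ and the spanning argument goes through there) rather than its literal statement --- which is exactly how you use it.
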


But actually, the preceding proposition implies more, in fact it says that $S$ is \emph{holomorphically flexible}, i.e.\ there exist finitely many complete holomorphic vector fields that span the tangent space in every point (see Arzhantsev et al.\ \cite{A-Z}*{Def.~A.4}) and we have the following immediate corollaries, see also \cite{A-Z} for details:

\begin{definition}\cite{Forstneric-book}*{Chap.\ 5} 
A \emph{spray} on a complex manifold $X$ is a triple $(E, \pi, s)$ consisting of a holomorphic vector bundle $\pi \colon E \to X$ and a holomorphic map $s \colon E \to X$  such that for each point $x \in X$ we have $s(0_x) = x$ where $0_x$ denotes the zero in the fibre over $x$.

The spray $(E, \pi, s)$ is said to be \emph{dominating} if for every point $x \in X$ we have
\[
\mathrm{d}_{0_x} s (E_{x}) = \mathrm{T}_x X
\]
A complex manifold is called \emph{elliptic} if it admits a dominating spray.
\end{definition}

In this definition we adapted the convention used e.g.\ in the textbook \cites{Forstneric-book} identifying the fibre $E_x$ over $x$ with its tangent space in $0_x$.

\begin{cor}
If $S$ is smooth, then it is elliptic in the sense of Gromov, i.e.\ it admits a dominating spray.
\end{cor}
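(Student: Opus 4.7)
The plan is to produce a dominating spray by the standard construction from finitely many $\CC$-complete holomorphic vector fields whose values span the tangent space everywhere. By Proposition \ref{proptrans}, we have such a finite collection $V_1, \dots, V_N$ on $S$: the explicit vector fields \eqref{eqcompletezerochart1}--\eqref{eqcompletezerochart3} and \eqref{eqcompleteotherchart1}--\eqref{eqcompleteotherchart2} together with the stated pull-backs by flows. Note that the pull-back of a $\CC$-complete holomorphic vector field by the time-$t$ flow of another $\CC$-complete vector field is again $\CC$-complete, since conjugation by a holomorphic automorphism of $S$ preserves completeness, so the collection $V_1, \dots, V_N$ consists entirely of $\CC$-complete holomorphic vector fields.

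I would then take the trivial holomorphic vector bundle $\pi \colon E = S \times \CC^N \to S$ and define
\[
s \colon E \to S, \qquad s(x, t_1, \dots, t_N) = \phi^{V_1}_{t_1} \circ \phi^{V_2}_{t_2} \circ \cdots \circ \phi^{V_N}_{t_N}(x),
\]
where $\phi^{V_i}_t$ denotes the time-$t$ flow of $V_i$. Since each $V_i$ is $\CC$-complete, every $\phi^{V_i}$ is a holomorphic $\CC$-action on $S$, so $s$ is a well-defined holomorphic map. The identity $s(x, 0, \dots, 0) = x$ shows that $s$ restricts to the identity on the zero section, so $(E, \pi, s)$ is a holomorphic spray on $S$.

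For dominance, differentiating $s$ in the fibre direction at the origin over $x \in S$ yields
\[
\mathrm{d}_{0_x} s \left( \frac{\partial}{\partial t_i} \right) = V_i(x) \qquad (i = 1, \dots, N),
\]
and by Proposition \ref{proptrans} these vectors span $T_x S$. Hence $\mathrm{d}_{0_x} s(E_x) = T_x S$, which is exactly the dominance condition. The construction is entirely routine (compare \cite{Forstneric-book}*{Chap.~5}), so there is no real obstacle in this step; the substantive content has already been packed into the flexibility assertion of Proposition \ref{proptrans}.
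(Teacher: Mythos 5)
Your proposal is correct and follows essentially the same route as the paper: the paper's proof also takes the trivial bundle $\CC^N \times S$ and defines the spray as the composition of the flows of the complete vector fields from Proposition \ref{proptrans}, with dominance following because the fibre-derivatives at the zero section recover the spanning vector fields. Your additional remarks (completeness of the pulled-back fields and the explicit computation of $\mathrm{d}_{0_x}s$) just make explicit what the paper leaves implicit.
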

\begin{proof}
If we denote by $\varphi_1, \dots, \varphi_N \colon \CC \times S \to S$, $N = 2(d+2)$, the flows of the complete holomorphic vector fields that span the tangent spaces in every point, then the map
\[
\CC^N \times S \to S, \; ((t_1, \dots, t_N), z) \mapsto \varphi_{N,t_N} \circ \dots \circ \varphi_{1, t_1}(z)) 
\]
is a dominating spray.
\end{proof}

\begin{cor}
If $S$ is smooth, then the group of holomorphic automorphisms acts $m$-transitively on $S$ for any $m \in \NN$.
\end{cor}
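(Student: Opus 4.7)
The plan is induction on $m$, adapting the standard scheme of Arzhantsev et al.\ \cite{A-Z} to the holomorphic setting. The base case $m=1$ is the transitivity established in the preceding corollary. For the inductive step, let $(p_1,\dots,p_m)$ and $(q_1,\dots,q_m)$ be two tuples of pairwise distinct points of $S$. Applying the $(m-1)$-transitivity hypothesis, I may assume that $p_i=q_i$ for $i=1,\dots,m-1$. Since $\dim_\CC S = 2$, the complement $S \setminus \{q_1,\dots,q_{m-1}\}$ is connected, so it suffices to show that the subgroup $G \subset \aut(S)$ fixing $q_1,\dots,q_{m-1}$ pointwise has open orbits on this complement. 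This reduces to the key lemma that for every $p\in S\setminus\{q_1,\dots,q_{m-1}\}$ the tangent space $T_pS$ is spanned by values $V(p)$ of $\CC$-complete holomorphic vector fields $V$ that vanish at all of $q_1,\dots,q_{m-1}$.

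To construct such vector fields I would exploit the elementary fact that if $V$ is $\CC$-complete and $f\in\hol(S)$ is a first integral for $V$, i.e.\ $V(f)\equiv 0$, then $fV$ is again $\CC$-complete and shares the same integral curves as $V$, merely reparametrized by the constant $f$-value on each curve. Reading off the explicit flow formulas given earlier in Section \ref{seccharts}, each of the building-block vector fields admits an obvious polynomial first integral: $z_0$ is a first integral of \eqref{eqcompletezerochart1} and \eqref{eqcompletezerochart2}, and $z_1-\lambda_1$ is a first integral of \eqref{eqcompletezerochart3}; analogously, in each chart $\varphi_i$ the coordinates $z_i-\lambda_i$ and $z_{i+1}$ serve as first integrals of \eqref{eqcompleteotherchart1} and \eqref{eqcompleteotherchart2} respectively. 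Given such a first integral $f$ of $V$, the polynomial $\pi(w) := \prod_{j=1}^{m-1}\bigl(w - f(q_j)\bigr) \in \CC[w]$ yields a regular function $\pi\circ f$ on $S$ which is itself a first integral of $V$ and vanishes at every $q_j$, so $(\pi\circ f)\cdot V$ is a $\CC$-complete holomorphic vector field vanishing at $q_1,\dots,q_{m-1}$.

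It remains to check that for any $p\in S\setminus\{q_1,\dots,q_{m-1}\}$ one can choose the multipliers so that two of the resulting modified vector fields are linearly independent at $p$. Generically this is immediate from the spanning statement of Proposition \ref{proptrans}, since the multipliers only rescale the values of the basic vector fields by the nonzero scalars $(\pi\circ f)(p)$. The main obstacle, and the step requiring some care, is the exceptional configuration in which $p$ happens to share a first-integral value with one of the $q_j$; this is dealt with by combining different first integrals coming from the various charts $\varphi_0,\dots,\varphi_d$ covering $S$, since for any fixed $p$ only finitely many first-integral values are forbidden and the chart coverage established earlier in Section \ref{seccharts}, together with the abundance of complete vector fields in each chart, provides enough flexibility to avoid them.
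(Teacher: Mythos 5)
Your proof differs from the paper, which does not argue directly at all but deduces $m$-transitivity from holomorphic flexibility by citing Varolin \cite{Varolin2} (an implicit-function-theorem argument); you instead attempt a self-contained induction, whose heart is the key lemma that for every $p\in S\setminus\{q_1,\dots,q_{m-1}\}$ the complete fields vanishing at $q_1,\dots,q_{m-1}$ span $T_pS$. This lemma is exactly where your argument has a genuine gap. Your only mechanism for producing such fields is multiplication of a basic complete field $V$ by $\pi\circ f$ with $f$ a first integral of $V$; this can never work when $f(p)=f(q_j)$ for some $j$, since every function of $f$ vanishing at $q_j$ then vanishes at $p$. Your proposed remedy --- switch to fields and first integrals from other charts --- is not available at every point, because there are points $p$ at which all basic fields except those with the ``bad'' first integral vanish. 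Concretely, take a Danielewski surface $z_0z_2=P_1(z_1)$ from this family, $m=2$, $p=(c,\lambda_1,0)$ with $c\neq 0$, and $q_1=(c,b,P_1(b)/c)$ with $b\neq\lambda_1$: the point $p$ lies only in the chart $\varphi_0$, the fields \eqref{eqcompleteotherchart1}, \eqref{eqcompleteotherchart2} and the field \eqref{eqcompletezerochart2} all vanish at $p$, so the only basic fields nonzero at $p$ are \eqref{eqcompletezerochart3} with zero exponent (first integral $z_1$) and \eqref{eqcompletezerochart1} (first integral $z_0$). Since $z_0(p)=z_0(q_1)$, no first-integral multiplier gives a complete field vanishing at $q_1$ and nonzero at $p$ in the $\partial/\partial z_1$-direction, so spanning at $p$ fails with your toolkit; and since your open-orbit reduction needs spanning at \emph{every} point of the complement, the induction does not close. (Note also that already Proposition \ref{proptrans} has to pull back by flows at such points $z_1=\lambda_1$, and multiplying those pulled-back fields by cut-off functions needs a separate justification you do not give.)

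The configuration above can be repaired using the stronger part of Remark \ref{shearcompleteness}: with $V=z_0^{j+d}\,\partial/\partial z_1$ and $h=z_1-z_1(q_1)$ one has $V(V(h))=0$, so $hV$ is complete, vanishes at $q_1$ and is nonzero at $p$. But even this is not enough in general: if two of the points $q_j$ lie on the same fibre $\{z_0=c\}$ as $p$, a multiplier of the form $f\cdot h$ with $f\in\ker V$ and $V(V(h))=0$ is affine in $z_1$ along that fibre, hence cannot vanish at both $q_j$'s without vanishing at $p$. A correct self-contained argument therefore needs, in addition, conjugation by flows of complete fields already known to vanish at the $q_j$'s (or some equivalent device), which is essentially the argument of Arzhantsev et al.\ \cite{A-Z} and requires a case analysis you have not carried out; alternatively, follow the paper and invoke Varolin \cite{Varolin2} directly.
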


\begin{proof}
This is a clever application of the implicit function theorem, see Varolin \cite{Varolin2} for details.
\end{proof}

\section{Generating a submodule}
\label{secsubmod}

We will see in the next section that we actually do not need to approximate all holomorphic vector fields by Lie combinations of complete holomorphic vector fields. It is sufficient to find a $\hol(S)$-submodule inside the Lie algebra generated by the complete holomorphic vector fields. This is the goal of this section.

\begin{remark}
The \emph{Kaliman--Kutzschebauch formula} (see the proof of Cor.~2.2 in \cite{denscrit}) which can be verified by a straightforward calculation, says that for vector fields $V$ and $W$ and holomorphic functions $f, g, h$ where $h \in \ker W$ we obtain:
\begin{equation}
\label{eqKKformula}
\tag{KK}
[f h V, g W] - [f V, g h W] = - f g V(h) W
\end{equation}
\end{remark}

\begin{remark}
\label{shearcompleteness}
Moreover, we note that if a vector field $V$ is complete and $f \in \ker V$, then $f V$ is complete as well, and if $V(V(h)) = 0$, then $f h V$ is complete too, see e.g.\ Varolin \cite{shears}.
\end{remark}

If we apply \eqref{eqKKformula} to the complete vector fields given by the equations \eqref{eqcompletezerochart1}, \eqref{eqcompletezerochart2} and \eqref{eqcompletezerochart3}, i.e.\
\[
V = z_0^d \frac{\partial}{\partial z_1}, \quad
W = (z_1-\lambda_1)^{d-1} z_0 \frac{\partial}{\partial z_0}, \quad
f = z_0^{j_0}, \quad
g = (z_1 - \lambda_1)^{j_1}
\]
and $h = (z_1 - \lambda_1)$, hence $V(h) = z_0^d$, we obtain that all vector fields of the following form are in the Lie algebra generated by the complete vector fields:
\begin{equation}
\label{eqtrivterms0}
z_0^{j_0} \cdot (z_1 - \lambda_1)^{j_1} \cdot (z_1-\lambda_1)^{d-1} z_0^{d+1} \frac{\partial}{\partial z_0}, \quad j_0, j_1 \in \NN_0
\end{equation}

Moreover, we obtain 
\begin{equation}
\label{eqtrivterms1}
\begin{split}
\left[ z_0^{j_0} \cdot (z_1 - \lambda_1)^{j_1+1} \cdot (z_1-\lambda_1)^{d-1} z_0^{d+1} \frac{\partial}{\partial z_0}, z_0^d \frac{\partial}{\partial z_1} \right] + \\ 
\frac{z_0^d}{j_0+1} z_0^{j_0} \cdot (z_1 - \lambda_1)^{j_1+1} \cdot (z_1-\lambda_1)^{d-1} z_0^{d+1} \frac{\partial}{\partial z_0} \\
 = d z_0^{j_0} \cdot (z_1 - \lambda_1)^{j_1} \cdot (z_1-\lambda_1)^{d-1} z_0^{2d} \frac{\partial}{\partial z_1}, \quad j_0, j_1 \in \NN_0
\end{split}
\end{equation}

Note that the set of functions $\mathrm{span}_\CC \left\{ z_0^{j_0} \cdot (z_1 - \lambda_1)^{j_1}, \; j_0, j_1 \in \NN_0 \right\}$ does not contain any non-trivial ideal, neither in the ring of functions of the coordinate chart, nor in the ring of functions of the surface $S$.

%\begin{lemma}
%In the chart $\varphi_i$ we find the following relation:
%\begin{equation}
%\frac{\partial z_1}{\partial z_i} = \frac{z_1 - \lambda_1}{z_0} \frac{\partial z_0}{\partial z_i} + \frac{z_1 - \lambda_1}{z_2} \frac{\partial z_2}{\partial z_i}
%\end{equation}
%\end{lemma}

\begin{proposition}
\label{propalgmodule}
Let $S$ be the complex surface given by the equations \eqref{eqdef1} and \eqref{eqdef2}, not necessarily smooth, with polynomials $P_1, \dots, P_d$ which may have multiple roots, and $\lambda_1, \dots, \lambda_{d} \in \CC$ arbitrary. 
Then the following $\regular(S)$-module in the Lie algebra of all algebraic vector fields on $S$ is contained in the Lie subalgebra generated by the complete algebraic vector fields on $S$:
\begin{equation}
\mathrm{span}_\CC \left\{
z_0^{j_0} \cdots z_{d+1}^{j_{d+1}} \cdot R_{d+1} \cdot {\varphi_d}_\ast \left( \frac{\partial}{\partial z_{d+1}} \right)
, \;  j_0, \dots, j_{d+1} \in \NN_0
\right\}
\end{equation}
where $R_{d+1} \in \CC[z_0, \dots, z_{d+1}]$ is a certain polynomial.
\end{proposition}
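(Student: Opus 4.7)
The strategy is to iterate the Kaliman--Kutzschebauch formula \eqref{eqKKformula} along the chain of charts $\varphi_0, \varphi_1, \ldots, \varphi_d$, enlarging at each stage the $\regular(S)$-submodule already known to lie inside the Lie subalgebra generated by the complete algebraic vector fields. The starting reservoir is provided by \eqref{eqtrivterms0} and \eqref{eqtrivterms1}: a single application of \eqref{eqKKformula} to the complete fields of Lemma \ref{lemzerochart} furnishes all vector fields whose coefficient in the chart $\varphi_0$ has the form $z_0^{j_0}(z_1-\lambda_1)^{j_1}$ times one of two particular base fields. Although $\mathrm{span}_\CC\{z_0^{j_0}(z_1-\lambda_1)^{j_1}\}$ is not an ideal, it will be enough as a seed for the induction.

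At the inductive step one assumes a submodule $\mathcal{M}_i$ whose elements, in the chart $\varphi_i$, take the form of an arbitrary monomial in $z_0,\ldots,z_{i+1}$ times a fixed base vector field, multiplied by a cumulative correction polynomial $R_i$. To pass to $\mathcal{M}_{i+1}$, pick $V \in \mathcal{M}_i$ together with a complete field $W$ from the chart $\varphi_{i+1}$ of the form \eqref{eqcompleteotherchart1} or \eqref{eqcompleteotherchart2}, and apply \eqref{eqKKformula} with $h$ a monomial in the global coordinates of $S$ chosen so that $W(h)=0$ while $V(h)$ introduces a new power of $z_{i+2}$. The identity then places $fg\,V(h)\,W$ in the Lie algebra with the desired enriched monomial content. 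After $d$ such iterations one has arrived at the chart $\varphi_d$, and a final round of KK applications against the complete fields of that chart converts the base vector field into ${\varphi_d}_\ast(\partial/\partial z_{d+1})$, leaving an accumulated correction polynomial $R_{d+1}$ that guarantees global algebraic extension across the complement of the chart.

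The main obstacle is the combinatorial bookkeeping. One has to choose the auxiliary functions $h$ at each KK application in such a way that the resulting terms jointly span the full monomial lattice in $z_0,\ldots,z_{d+1}$ rather than a proper sub-lattice, and simultaneously track how the correction polynomial $R_i$ grows. Closing $\mathcal{M}_i$ under multiplication by suitable factors of $z_{i+1}$ and $(z_{i+1}-\lambda_{i+1})$ -- by further applications of \eqref{eqKKformula} with complete fields already available in the current chart -- is essential to guarantee that at each step one really obtains an $\regular(S)$-module rather than just a $\CC$-linear span. Once this bookkeeping is settled, the remaining verifications reduce to routine polynomial manipulations dictated by the explicit parametrizations recorded in Section \ref{seccharts}.
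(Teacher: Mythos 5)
Your overall skeleton agrees with the paper's: the seed is the family \eqref{eqtrivterms0}, \eqref{eqtrivterms1} obtained from one application of \eqref{eqKKformula} in the chart $\varphi_0$, and one then climbs the chain of charts $\varphi_1,\dots,\varphi_d$ using the complete fields \eqref{eqcompleteotherchart1} and \eqref{eqcompleteotherchart2}. But the heart of the matter is the induction step, and there your proposal both deviates from what the paper does and leaves a genuine gap. The paper does \emph{not} iterate the Kaliman--Kutzschebauch formula: it transports the previously obtained module into the next chart via the identity \eqref{eqinductionmove}, and then performs three explicit Lie brackets with the complete fields of that chart, using the trick of working with $R_k^2$ (so that the unwanted derivative terms still contain a factor $R_k$ and, after rewriting the bounded powers of $z_{k+1}$ through the defining equations, are visibly already in the module and can be subtracted). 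This is how the base direction is changed from the old ${\varphi_{k-1}}_\ast$-direction to ${\varphi_k}_\ast\left(\frac{\partial}{\partial z_{k+1}}\right)$ while simultaneously acquiring arbitrary powers of $z_{k+1}$.

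Your proposed mechanism --- one application of \eqref{eqKKformula} per step with $V \in \mathcal{M}_i$, $W$ from the next chart, and a monomial $h$ with $W(h)=0$ --- does not deliver this. For the identity to produce a new element of the Lie algebra you need $fV$ and $fhV$ to lie in what is already established and $gW$, $ghW$ to be complete. If you take $W$ of type \eqref{eqcompleteotherchart1} (the only family carrying arbitrary powers of $z_{i+2}$), then $h \in \ker W$ forces $h$ to depend on $z_{i+2}$, so $fhV$ is not known to lie in $\mathcal{M}_i$, whose coefficients are monomials in $z_0,\dots,z_{i+1}$ only; worse, the output $-fg\,V(h)\,W$ is a multiple of $W$ and hence points in the $\partial/\partial z_{i+1}$-direction of the chart $\varphi_{i+1}$, not in the required direction ${\varphi_{i+1}}_\ast\left(\frac{\partial}{\partial z_{i+2}}\right)$. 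If instead you take $W$ of type \eqref{eqcompleteotherchart2}, the direction is right, but then $g$ and $h$ must be functions of $z_{i+1}$ alone, so no powers of $z_{i+2}$ are generated, and closing under multiplication by factors of $z_{i+1}$ or $(z_{i+1}-\lambda_{i+1})$, as you suggest, cannot supply them. Thus the two tasks that constitute the induction step --- changing the base direction and enlarging the monomial lattice by the new coordinate --- cannot both be achieved by the single-formula scheme you describe, and deferring their reconciliation to ``combinatorial bookkeeping'' is exactly the part of the argument that carries the mathematical content; as written, the proposal does not constitute a proof.
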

\begin{proof}
We proceed by induction on $k = 0, 1, \dots, d$ to show that 
\begin{equation}
\mathrm{span}_\CC \left\{
z_0^{j_0} \cdots z_{k+1}^{j_{k+1}} \cdot R_{k+1} \cdot {\varphi_k}_\ast \left( \frac{\partial}{\partial z_{k+1}} \right)
, \;  j_0, \dots, j_{k+1} \in \NN_0
\right\}
\end{equation}
is in the Lie algebra generated generated by the complete algebraic vector fields on $S$.

For $k=0$ we have by \eqref{eqtrivterms1} in the chart $\varphi_0$ that
\[
z_0^{j_0+2d} \cdot (z_1 - \lambda_1)^{j_1+d-1} \frac{\partial}{\partial z_1} = z_0^{j_0} \cdot z_1^{j_1} \cdot R_1 \cdot \frac{\partial}{\partial z_1} , \quad j_0, j_1 \in \NN_0
\]
is in the Lie subalgebra generated by complete algebraic vector fields on $S$ where $R_1 = z_0^{2d} \cdot (z_1 - \lambda_1)^{d-1}$. Note that we can replace $(z_1 - \lambda_1)^{j_1}$ by $z_1^{j_1}$ since we work in the span.

For the induction step $k-1 \mapsto k$ we first note that
\begin{equation}
\label{eqinductionmove}
\varphi_k^\ast {\varphi_{k-1}}_\ast \left( \frac{\partial}{\partial z_{k-1}} \right) = \frac{\partial z_{k+1}}{\partial z_{z_{k-1}}} \cdot \frac{\partial}{\partial z_{k+1}} = -\frac{z_{k+1}}{z_{k-1} - \lambda_{k-1}} \cdot \frac{\partial}{\partial z_{k+1}}
\end{equation}
We used that in the chart $\varphi_{k-1}$ we have $\frac{\partial z_{k}}{\partial z_{k-1}} = 0$ and that the inverse of $\varphi_k$ is just the projection.
We now calculate the following Lie brackets in the chart $\varphi_k$:
\begin{align*}
\begin{split}
\left[ z_0^{j_0} \cdot z_1^{j_1} \cdots z_{k}^{j_{k}} \cdot R_{k}^2 \cdot \frac{-z_{k+1}}{z_{k-1} - \lambda_{k-1}} \cdot \frac{\partial}{\partial z_{k+1}}, \; z_{k+1}^d \cdot (z_k - \lambda_k) \cdot \frac{\partial}{\partial z_k} \right]
\\=
 z_0^{j_0} \cdot z_1^{j_1} \cdots z_{k}^{j_{k}} \cdot R_{k}^2 \cdot \frac{-z_{k+1}}{z_{k-1} - \lambda_{k-1}} \cdot d \cdot z_{k+1}^{d-1} \cdot ( z_k - \lambda_k ) \frac{\partial}{\partial z_{k}}
\\+  \left( \frac{\partial}{\partial z_k}
z_0^{j_0} \cdot z_1^{j_1} \cdots z_{k}^{j_{k}} \cdot \frac{z_{k+1}}{z_{k-1} - \lambda_{k-1}} \right) \cdot R_{k}^2 \cdot \frac{\partial}{\partial z_{k+1}}
\\+ z_0^{j_0} \cdot z_1^{j_1} \cdots z_{k}^{j_{k}} \cdot \frac{z_{k+1}}{z_{k-1} - \lambda_{k-1}} \cdot \frac{\partial R_{k}}{\partial z_k} \cdot 2 R_k \cdot \frac{\partial}{\partial z_{k+1}}
\end{split}
\end{align*}
Note that we interpret the $z_0, \dots, z_{k-1}$ as rational functions of $(z_k, z_{k+1})$.
Rewriting the coordinates $z_{k+1}$'s which occur only up to power $d-1$ in terms of $z_0, \dots, z_{k}$ and raising the powers in $z_0, \dots, z_{k}$ if necessary, we can assume that the $\frac{\partial}{\partial z_{k+1}}$-terms are already in the Lie subalgebra generated by the complete algebraic vector fields due to \eqref{eqinductionmove} and subtract it.
We can continue with the term
\[
z_0^{j_0} \cdot z_1^{j_1} \cdots z_{k}^{j_{k}} \cdot \widehat{R}_k \cdot  \frac{\partial}{\partial z_{k}}
\]
for a suitable polynomial $\widehat{R}_k$.

\begin{equation}
\begin{split}
\left[
z_0^{j_0} \cdot z_1^{j_1} \cdots z_{k}^{j_{k}} \cdot \widehat{R}_k \cdot  \frac{\partial}{\partial z_{k}}, \;
z_{k+1}^{j_{k+1}+d-1} \cdot (z_{k} - \lambda_{k}) \cdot \frac{\partial}{\partial z_{k}} 
\right]
\\= z_0^{j_0} \cdot z_1^{j_1} \cdots z_{k}^{j_{k}} \cdot z_{k+1}^{j_{k+1}} \widehat{R}_{k+1} \frac{\partial}{\partial z_{k}} 
\end{split}
\end{equation}
for a suitable polynomial $\widehat{R}_{k+1}$, again raising the powers if necessary and re-absorbing them into $\widehat{R}_{k+1}$.

To complete the induction, we calculate, again in the chart $\varphi_k$:
\[
\begin{split}
\left[
z_0^{j_0} \cdot z_1^{j_1} \cdots \cdot z_k^{j_k} \cdot z_{k+1}^{j_{k+1}} \cdot \widehat{R}_{k+1}^2 \cdot \frac{\partial}{\partial z_k}, \;
(z_k - \lambda_k)^{d-1} \cdot z_{k+1} \cdot \frac{\partial}{\partial z_{k+1}}\right]
\\=
z_0^{j_0} \cdot z_1^{j_1}  \cdots \cdot z_k^{j_k} \cdot z_{k+1}^{j_{k+1}} \cdot \widehat{R}_{k+1}^2 \cdot (d-1) \cdot (z_k - \lambda_k)^{d-2} \cdot z_{k+1} \cdot \frac{\partial}{\partial z_{k+1}}
\\-(z_k - \lambda_k)^{d-1} \cdot z_{k+1} \cdot \left( \frac{\partial}{\partial z_{k+1}} z_0^{j_0} \cdot z_1^{j_1} \cdots \cdot z_k^{j_k} \cdot z_{k+1}^{j_{k+1}} \cdot \widehat{R}_{k+1}^2 \right) \frac{\partial}{\partial z_k}
\end{split}
\]
Raising the powers in $z_0, \dots, z_{k+1}$ if necessary, we can assume that the last term is already in the Lie subalgebra generated by the complete algebraic vector fields and subtract it. We define $R_{k+1}$ to be $\widehat{R}_{k+1}^2 \cdot (d-1) \cdot (z_k - \lambda_k)^{d-2} \cdot z_{k+1}$ multiplied by these powers.
\end{proof}

\begin{remark}
\label{remccc}
The crucial ingredient to make this proof work is the existence of nice affine charts which form sort of a ``chain'', i.e.\ two of them either intersect in a (punctured) coordinate axis of the ambient space or have empty intersection, hence we call them a \emph{charming chain of charts} that must satisfy the following:
Let $X \subset \CC^{N+1}_{z_0, z_1, \dots, z_N, z_{N+1}}$ be a complex affine-algebraic surface and let $\varphi_0, \varphi_1, \dots, \varphi_N$ be local parametrizations of $X$ with the following properties:
\begin{enumerate}
\item $\varphi_i^{-1}(z_0, \dots, z_{N+1}) = (z_i, z_{i+1})$ for $i = 0, \dots, N$
\item $\mathop{\mathrm{dom}} \varphi_0 \cong \CC \times \CC^\ast$
\item $\mathop{\mathrm{dom}} \varphi_i \cong \CC^\ast \times \CC^\ast$ for $i = 1, \dots, N$
\item\label{cccproppair} There exists a \emph{compatible pair} of complete algebraic vector fields on $\mathop{\mathrm{dom}} \varphi_0$ within $\regular(\CC^2) \subseteq \regular(\mathop{\mathrm{dom}} \varphi_0)$ that extends to $S$.
\item There exist two complete algebraic vector fields $\mu_i, \nu_i$ on each $\mathop{\mathrm{dom}} \varphi_i$ that span the tangent space in every point of $\mathop{\mathrm{dom}} \varphi_i$ and such that they extend to $S$ for $i = 1, \dots, N$.
\end{enumerate}

Note that in Property \ref{cccproppair} above, the ideal of the compatible pair does in general not extend, otherwise the density property would follow easily using the results of Kaliman and Kutzschebauch \cite{denscrit}.
\end{remark}

Let us remind the definition of a compatible pair from \cite{denscrit}:
\begin{definition}
Let $V$ and $W$ be non-trivial algebraic vector fields on an complex affine-algebraic manifold $X$ such that $V$ is a locally nilpotent derivation on $\regular(X)$ and $W$ is either locally nilpotent or semi-simple. We say that $(V, W)$ is a \emph{compatible pair} if
\begin{itemize}
\item $\mathop{\mathrm{span}_\CC} \{ \ker V \cdot \ker W \}$ contains a non-trivial ideal in $\regular(X)$ and
\item $\exists \; h \in \ker W$ with $V(h) \neq 0$ but $V(V(h)) = 0$.
\end{itemize}
\end{definition}

\section{Density Property}
\label{secdensity}

In this section we briefly recall the notions and Theorems we need for proving our result and then prove the Theorems \ref{thmdensprop} and \ref{thmrelativedensprop}.

\begin{definition}[\cite{denscrit}*{Definition 2.2}] \hfill
\begin{enumerate}
\item Let $X$ be an algebraic manifold and $x_0 \in X$.
A finite subset $M$ of the tangent space $T_{x_0} X$ is called a \emph{generating set}
if the image of $M$ under the action of the isotropy subgroup of $x_0$
(in the group of all algebraic automorphisms $\mathrm{Aut}_\mathrm{alg}(X)$)
generates the whole space $T_{x_0} X$.
\item Let $X$ be a complex manifold and $x_0 \in X$.
A finite subset $M$ of the tangent space $T_{x_0} X$ is called a \emph{generating set}
if the image of $M$ under the action of the isotropy subgroup of $x_0$
(in the group of all holomorphic automorphisms $\mathrm{Aut}_\mathrm{hol}(X)$)
generates the whole space $T_{x_0} X$.
\end{enumerate}

\end{definition}

We will make use of the following, central result of \cite{denscrit}:
\begin{theorem}\cite{denscrit}*{Theorem 1}
\label{thmmodulealg}
Let $X$ be an affine algebraic manifold, homogeneous w.r.t.\ $\mathop{\mathrm{Aut}_{\mathrm{alg}}} X$, with algebra of regular functions $\CC[X]$, and let $L$ be a submodule of the $\CC[X]$-module of all algebraic vector fields such that $L \subseteq \mathrm{Lie}_{\mathrm{alg}}(X)$.
Suppose that the fiber of $L$ over some $x_0 \in X$ contains a generating set.
Then $X$ has the algebraic density property.
\end{theorem}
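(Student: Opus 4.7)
The plan is to exploit the homogeneity of $X$ and the $\CC[X]$-module structure of $L$ to enlarge $L$, while keeping it inside $\mathrm{Lie}_{\mathrm{alg}}(X)$, to a submodule whose fiber at every point is the full tangent space, and then to conclude by a Nakayama-type argument.

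First I would note that for every $\phi \in \autalg(X)$ the pushforward $\phi_\ast L$ is again a $\CC[X]$-submodule of $\mathrm{VF}_{\mathrm{alg}}(X)$, because $\phi_\ast(fV) = (f\circ\phi^{-1})\,\phi_\ast V$ and $f \mapsto f\circ\phi^{-1}$ is an automorphism of $\CC[X]$. Moreover, pushforward by an algebraic automorphism sends complete algebraic vector fields to complete algebraic vector fields and commutes with Lie brackets, so $\phi_\ast L \subseteq \mathrm{Lie}_{\mathrm{alg}}(X)$. Setting
\[
L' \;:=\; \sum_{\phi \in \autalg(X)} \phi_\ast L,
\]
we obtain an $\autalg(X)$-invariant $\CC[X]$-submodule of $\mathrm{VF}_{\mathrm{alg}}(X)$ that still lies inside $\mathrm{Lie}_{\mathrm{alg}}(X)$.

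Next I would show that $L'(x) = T_x X$ for every $x \in X$. For each $v$ in the generating set $M \subseteq L(x_0)$ pick $V_v \in L$ with $V_v(x_0) = v$; for any $\phi$ in the isotropy subgroup of $x_0$ the value $(\phi_\ast V_v)(x_0) = (\mathrm{d}\phi)_{x_0}\, v$ lies in $L'(x_0)$, and by the generating-set hypothesis such vectors span $T_{x_0}X$. The transitivity of $\autalg(X)$ combined with the $\autalg(X)$-invariance of $L'$ then propagates the full-fiber conclusion from $x_0$ to every point of $X$.

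Since $X$ is smooth and affine, $\mathrm{VF}_{\mathrm{alg}}(X)$ is a finitely generated projective $\CC[X]$-module; localizing at an arbitrary maximal ideal $\mathfrak{m}_x$, the fiber equality $L'(x) = T_x X$ reads $L'_{\mathfrak{m}_x} + \mathfrak{m}_x \cdot \mathrm{VF}_{\mathrm{alg}}(X)_{\mathfrak{m}_x} = \mathrm{VF}_{\mathrm{alg}}(X)_{\mathfrak{m}_x}$, and Nakayama's lemma applied to the finitely generated cokernel $\mathrm{VF}_{\mathrm{alg}}(X)_{\mathfrak{m}_x}/L'_{\mathfrak{m}_x}$ forces it to vanish at every closed point. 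Hence $L' = \mathrm{VF}_{\mathrm{alg}}(X) \subseteq \mathrm{Lie}_{\mathrm{alg}}(X)$, which is the algebraic density property. The main conceptual obstacle is that $\phi_\ast L$ generally leaves $L$, so one cannot enlarge $L$ directly; the essential trick is to form the $\autalg(X)$-orbit-sum $L'$, which combines the module hypothesis and the generating-set hypothesis in tandem to produce a submodule with full fibers that still lies inside the Lie algebra generated by the complete algebraic vector fields.
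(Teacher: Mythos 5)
Your proof is correct, and it is essentially the argument behind the cited result: the paper itself does not reprove Theorem~\ref{thmmodulealg} but quotes it from \cite{denscrit}, where the proof likewise pushes $L$ forward by algebraic automorphisms, uses the generating-set/isotropy hypothesis plus transitivity to get full fibers everywhere, and concludes that the resulting submodule is all of $\mathrm{VF}_{\mathrm{alg}}(X)$ by the standard Nakayama-type argument. So there is nothing to object to; your write-up matches the intended proof in both structure and substance.
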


The analogous statement in the holomorphic case follows with essentially the same proof:

\begin{theorem}\cite{lengthfour}*{Theorem 2.3}
\label{thmmodulehol}
Let $X$ be a Stein manifold, \linebreak homogeneous w.r.t.\ $\mathop{\mathrm{Aut}_{\mathrm{hol}}}\! X$\!,
with algebra of holomorphic functions \linebreak $\hol(X)$, and let $L$ be a submodule of the $\hol(X)$-module of all holomorphic vector fields such that $L \subseteq \overline{\mathrm{Lie}_{\mathrm{hol}}(X)}$.
Suppose that the fiber of $L$ over some $x_0 \in X$ contains a generating set.
Then $X$ has the density property.
\end{theorem}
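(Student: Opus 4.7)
The plan is to mirror the proof of Theorem~\ref{thmmodulealg} (the algebraic density criterion) with the algebraic machinery (Nullstellensatz, finite generation of modules of regular sections) replaced by the sheaf-theoretic tools available on a Stein manifold: Cartan's Theorems A and B together with Oka--Weil approximation. The goal is to show that every holomorphic vector field on $X$ lies in the compact-open closure $\overline{\mathrm{Lie}_{\mathrm{hol}}(X)}$.

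First I would enlarge $L$ by letting the full automorphism group act on it:
\[
L^{\ast} := \sum_{\Phi \in \aut(X)} \Phi^{\ast} L,
\]
regarded as the $\hol(X)$-submodule of $\mathrm{VF}_{\mathrm{hol}}(X)$ generated by all pullbacks of elements of $L$. Since every $\Phi$ carries $\CC$-complete fields to $\CC$-complete fields, the operator $\Phi^{\ast}$ preserves $\mathrm{Lie}_{\mathrm{hol}}(X)$, and by continuity in the compact-open topology it preserves its closure as well, so $L^{\ast} \subseteq \overline{\mathrm{Lie}_{\mathrm{hol}}(X)}$. Next I would check that the fiber $L^{\ast}_{x}$ equals $T_{x} X$ for every $x \in X$. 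At $x_{0}$ this is immediate from the hypothesis: $L_{x_{0}}$ contains a generating set $M$, so by the very definition of a generating set the images $\mathrm{d}\Phi(M)$, as $\Phi$ ranges over the isotropy subgroup of $x_{0}$, span $T_{x_{0}} X$, and they all sit in $L^{\ast}_{x_{0}}$. For an arbitrary $x$, homogeneity furnishes $\Psi \in \aut(X)$ with $\Psi(x_{0}) = x$; then $(\Psi^{-1})^{\ast}$ sends $L^{\ast}$ into itself and maps $T_{x_{0}} X = L^{\ast}_{x_{0}}$ onto $T_{x} X \subseteq L^{\ast}_{x}$.

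The final ingredient is a Stein-theoretic globalization. Fix $V \in \mathrm{VF}_{\mathrm{hol}}(X)$, an $\hol(X)$-convex compactum $K \subset X$ and $\varepsilon > 0$. Because $T_{y} X$ is finite-dimensional and is stalkwise generated by $L^{\ast}$, for each $x \in K$ finitely many elements of $L^{\ast}$ span $T_{y} X$ on an open neighborhood of $x$, and compactness of $K$ then delivers a single finite collection $W_{1}, \dots, W_{N} \in L^{\ast}$ whose values span $T_{y} X$ throughout an open Stein neighborhood $U \supseteq K$. On $U$ the map $(f_{1}, \dots, f_{N}) \mapsto \sum_{j} f_{j} W_{j}$ defines a surjection of coherent sheaves $\hol^{N}|_{U} \twoheadrightarrow \mathcal{T}_{X}|_{U}$, and Cartan's Theorem B applied to its kernel produces $f_{1}, \dots, f_{N} \in \hol(U)$ with $V|_{U} = \sum_{j} f_{j} W_{j}$. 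Oka--Weil approximation on the $\hol(X)$-convex compactum $K$ yields $\tilde{f}_{j} \in \hol(X)$ approximating $f_{j}$ uniformly on $K$; then $V' := \sum_{j} \tilde{f}_{j} W_{j}$ lies in $L^{\ast} \subseteq \overline{\mathrm{Lie}_{\mathrm{hol}}(X)}$ and approximates $V$ uniformly on $K$ to within $\varepsilon$.

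The main obstacle lies precisely in this last step. In the algebraic setting one solves the linear system $V = \sum_{j} f_{j} W_{j}$ globally on $X$ using Noetherianness and finite generation over $\regular(X)$, so the analogue of $V'$ already sits inside the algebraic submodule with no approximation required. In the holomorphic setting the corresponding $\hol$-linear decomposition exists only on a Stein neighborhood of $K$, and one must invoke Oka--Weil to transport back to global sections on $X$; correspondingly one obtains density rather than equality, which is exactly what the density property demands. A small amount of bookkeeping is required to verify that $\Phi^{\ast}$ acts continuously on $\mathrm{VF}_{\mathrm{hol}}(X)$ in the compact-open topology and commutes with the $\hol(X)$-module structure, so that the enlarged object $L^{\ast}$ really remains inside $\overline{\mathrm{Lie}_{\mathrm{hol}}(X)}$.
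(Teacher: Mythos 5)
Your proposal is correct and follows essentially the same route as the proof the paper relies on (the holomorphic adaptation, from \cite{lengthfour}, of the Kaliman--Kutzschebauch criterion): saturate $L$ under $\aut(X)$, use the generating set together with homogeneity to make the fibers of the saturated module span every tangent space, and then replace the algebraic Nakayama/finite-generation step by coherence of the generated subsheaf, Cartan's Theorems A and B on a Stein neighborhood of an $\hol(X)$-convex compactum, and Oka--Weil approximation back to global data. The only (routine) points to make explicit are that $\overline{\mathrm{Lie}_{\mathrm{hol}}(X)}$ is a closed linear subspace, so the sums defining $L^{\ast}$ stay inside it, and that approximation on holomorphically convex hulls suffices for density in the compact-open topology.
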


We now apply Theorem \ref{thmmodulehol} to prove the density property of $S$:

\begin{proof}[Proof of Theorem \ref{thmdensprop}]
By Proposition \ref{proptrans} the smooth complex surface $S$ is a $\aut(S)$-homogeneous manifold.
By Proposition \ref{propalgmodule} there exists a non-trivial $\regular(S)$-submodule of $\mathrm{Lie}_{\mathrm{alg}}(S)$, of which we take its closure $L$ with respect to compact convergence. This is a non-trivial $\hol(S)$-submodule in $\overline{\mathrm{Lie}_{\mathrm{hol}}(S)}$.
In order to apply Theorem \ref{thmmodulealg} we need to find a generating set; we follow the proof of \cite{lengthfour}*{Theorem 1.7} and pick a point $p = (p_0, p_1, \dots, p_d, p_{d+1}) \in S$ such that $p_0 \neq 0$ and such that we find a vector field $\mu \in L \neq 0$ which does not vanish in $p$, e.g.\ $\mu =  {\varphi_0}_{\ast}\left( (z_1 - \lambda_1)^{d-1} z_0 \frac{\partial}{\partial z_0} \right)$. The vector field $\nu := {\varphi_0}_\ast\left( z_0^d \frac{\partial}{\partial z_1} \right)$ is complete by Lemma \ref{lemzerochart} and does not vanish in $p$.

The function $f(z_0, \dots, z_{d+1}) := z_0 - p_0$ lies in its kernel and hence by Remark \ref{shearcompleteness}, $f \cdot \nu$ is again complete, and its flow map will fix the point $p$. The induced action of the time-$1$-map on the tangent space $T_{p} S$ is given by $w \mapsto w + d_{p} f \nu(p) = w + {z_0}^d \cdot {\varphi_0}{\ast}\left( \frac{\partial}{\partial z_1} \right)$, see e.g.\ the calculation in \cite{denscrit}*{Corollary 2.8}. Evaluating this expression for $w = \mu(p)$ and noting that $\mu(p)$ is a non-zero vector in direction of ${\varphi_0}_{\ast}\left( \frac{\partial}{\partial z_0} \right)$ we see that $\mu(p)$ is a generating set.
We can now apply Theorem \ref{thmmodulehol} to $S$.
\end{proof}

In the singular or non-transitive algebraic situation, we need the following ingredient from Kutzschebauch--Leuenberger--Liendo:
\begin{theorem}\cite{singular}*{Theorem 2.2}
\label{thmrelativemodule}
Let $X$ be a complex affine-algebraic algebraic variety and let $A$ be a subvariety containing its singularity locus.
Assume that $\autalg(X, A)$, the subgroup of $\autalg(X)$ that stabilizes $A$, acts transitively on $X \setminus A$.
Let $L$ be a finitely generated $\CC[X]$-submodule of $\mathrm{VF}_\mathrm{alg}(X, A)$, the algebraic vector fields on $X$ that vanish on $A$.
Assume that $L$ is contained the Lie algebra generated by the complete vector fields vanishing on $A$. If the fiber of $L$ over some $p \in X \setminus A$ contains a generating set, then $X$ has the algebraic density property relative to $A$.
\end{theorem}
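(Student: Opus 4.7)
The plan is to adapt the Kaliman--Kutzschebauch proof of the absolute density criterion (Theorem \ref{thmmodulealg}) to the relative setting, in which every vector field, every Lie combination, and every automorphism is required to vanish on, respectively stabilize, the subvariety $A$. I would proceed in three steps: enlarge $L$ to a finitely generated $\CC[X]$-submodule $L' \subseteq \mathrm{Lie}_\mathrm{alg}(X,A)$ whose fiber at every $q \in X \setminus A$ fills $\mathrm{VF}_\mathrm{alg}(X,A)_q$, and then conclude by Hilbert's Nullstellensatz.

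The first key observation is that $\mathrm{Lie}_\mathrm{alg}(X,A)$ is invariant under pushforward by $\autalg(X,A)$: if $\alpha \in \autalg(X,A)$ and $V$ is a complete algebraic vector field vanishing on $A$, then $\alpha_* V$ is again complete (its flow is $\alpha \phi_t^V \alpha^{-1}$) and still vanishes on $A$, since $\alpha$ stabilizes $A$ setwise and $V$ vanishes on $\alpha^{-1}(A) = A$. Hence $\alpha_* \mathrm{Lie}_\mathrm{alg}(X,A) \subseteq \mathrm{Lie}_\mathrm{alg}(X,A)$. Using the generating-set hypothesis at the base point $p$, finitely many isotropy elements $\alpha_1, \dots, \alpha_r \in \autalg(X,A)$ send the subset $M \subset L_p$ to a spanning set of $T_p X$. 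Adjoining the pushforwards $(\alpha_i)_* L$ of the given generators of $L$ produces a finitely generated module $L' \subseteq \mathrm{Lie}_\mathrm{alg}(X,A)$ with fiber $L'_p = T_p X$.

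To propagate this fiber-fullness to every point of $X \setminus A$, I use the transitivity of $\autalg(X,A)$ on $X \setminus A$: the ``defect locus'' $Z \subseteq X$ where $L'$ does not yet fill $\mathrm{VF}_\mathrm{alg}(X,A)$ is a closed subvariety (it is the support of a coherent quotient module). If $Z \not\subseteq A$, pick $q \in Z \setminus A$ and $\psi \in \autalg(X,A)$ with $\psi(p) = q$; augmenting $L'$ by $\psi_* L'$ keeps it finitely generated inside $\mathrm{Lie}_\mathrm{alg}(X,A)$ and strictly shrinks $Z$. By the Noetherian property of $\CC[X]$, this descending process terminates after finitely many steps with $Z \subseteq A$. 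At that point, $Q := \mathrm{VF}_\mathrm{alg}(X,A)/L'$ is a finitely generated $\CC[X]$-module supported on $A$, so by Hilbert's Nullstellensatz some power $\mathcal{I}_A^\ell$ annihilates $Q$, yielding the desired inclusion $\mathcal{I}_A^\ell \cdot \mathrm{VF}_\mathrm{alg}(X,A) \subseteq L' \subseteq \mathrm{Lie}_\mathrm{alg}(X,A)$.

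The hardest part is the combinatorial bookkeeping in the enlargement step, ensuring finite generation of $L'$ after all the adjoined pushforwards while verifying that the vanishing-on-$A$ condition is preserved at each stage. The hypothesis that the generators of $L$ lie in the Lie algebra generated by complete vector fields \emph{vanishing on} $A$ (not merely stabilizing $A$ setwise) is essential here: it is this stronger hypothesis that both makes the pushforward-invariance statement work and, in the final step, ensures that the annihilator of $Q$ is actually a power of $\mathcal{I}_A$ rather than an ideal supported on a possibly larger subvariety containing $A$.
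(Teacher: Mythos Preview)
The paper does not prove this theorem at all: it is quoted verbatim from \cite{singular}*{Theorem 2.2} and used as a black box in the proof of Theorem~\ref{thmrelativedensprop}. There is therefore no ``paper's own proof'' to compare against.

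That said, your sketch is essentially the correct argument and is exactly the relative version of the Kaliman--Kutzschebauch proof of Theorem~\ref{thmmodulealg}, which is indeed how \cite{singular} proceeds. A couple of small points worth tightening. First, your closing paragraph overstates the difficulty: finite generation of $L'$ is automatic because at each stage you adjoin finitely many pushforwards of a fixed finite generating set, and the Noetherian descent on $Z$ guarantees that only finitely many stages occur. Second, the remark that you need the annihilator of $Q$ to be ``a power of $\mathcal{I}_A$ rather than an ideal supported on a possibly larger subvariety'' is slightly off. You only need $\mathcal{I}_A^\ell \subseteq \mathrm{Ann}(Q)$ for some $\ell$, and this follows once $\mathrm{Supp}(Q) \subseteq A$, i.e.\ $V(\mathrm{Ann}(Q)) \subseteq A$, by the Nullstellensatz applied to the finitely generated ideal $\mathcal{I}_A$; the support cannot be \emph{larger} than $A$ after your descent has terminated with $Z \subseteq A$. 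The hypothesis that the complete fields vanish on $A$ (rather than merely stabilize it) is needed so that $\mathrm{Lie}_\mathrm{alg}(X,A)$ really consists of vector fields vanishing on $A$, but it plays no special role in the Nullstellensatz step.
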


\begin{proof}[Proof of Theorem \ref{thmrelativedensprop}]
The proof is essentially identical to the proof of Theorem \ref{thmrelativemodule}, we just use \ref{thmrelativemodule} instead of Theorem \ref{thmmodulehol}. We observe that the $\CC[S]$-submodule $L$ provided by Proposition \ref{propalgmodule} is indeed finitely generated.
\end{proof}

\section{Geometric Consequences}
\label{secgeom}
We repeat here for comparison the definition of the relative algebraic density property given in the introduction and give the definition of the relative (holomorphic) density property as well:

\begin{definition}\cite{singular}*{Def.~1.1}
Let $X$ be a complex affine-algebraic variety. We say that $X$ has the \emph{algebraic density property relative to a subvariety} $A \subset X$ if $\mathrm{Lie}_{\mathrm{alg}}(X, A)$, the Lie algebra generated by the $\CC$-complete algebraic vector fields on $X$ that vanish in $A$ and $\mathrm{VF}_{\mathrm{alg}}(X, A)$, the Lie algebra of all algebraic vector fields on $X$ that vanish on $A$ satisfy
\[
\mathcal{I}_A^\ell \cdot \mathrm{VF}_{\mathrm{alg}}(X, A) \subseteq \mathrm{Lie}_{\mathrm{alg}}(X, A)
\]
for some $\ell \in \NN_0$.
\end{definition}

\begin{definition}\cite{singular}*{Def.~6.1}
Let $X$ be a Stein space. We say that $X$ has the \emph{density property relative to an analytic subvariety} $A \subset X$ if $\mathrm{Lie}_{\mathrm{hol}}(X, A)$, the Lie algebra generated by the $\CC$-complete holomorphic vector fields on $X$ that vanish in $A$ and $\mathrm{VF}_{\mathrm{hol}}(X, A)$, the Lie algebra of all holomorphic vector fields on $X$ that vanish on $A$ satisfy
\[
\mathcal{I}_A^\ell \cdot \mathrm{VF}_{\mathrm{hol}}(X, A) \subseteq \overline{\mathrm{Lie}_{\mathrm{hol}}(X, A)}
\]
for some $\ell \in \NN_0$.
\end{definition}

It is easy to see that the (relative) algebraic density property implies the (relative) density property, see \cite{singular}*{Prop.~6.2}.

The main motivation for establishing the (relative) density property of the (relative) density property is the so-called \emph{Anders\'en--Lempert Theorem} which is a Runge-type approximation theorem for holomorphic automorphisms.

\begin{theorem}\cite{singular}*{Theorem 6.3}
Let $X$ be a normal reduced Stein space and let $A \subset X$ be a closed analytic subvariety that contains the singularity locus of $X$. Let $\Omega \subseteq X$ be an open subset and $\varphi \colon [0,1] \times \Omega \to X$ be a $\cont^1$-smooth map such that
\begin{enumerate}
\item $\varphi_0 \colon \Omega \to X$ is the natural embedding,
\item $\varphi_t \colon \Omega \to X$ is holomorphic and injective for every $t \in [0,1]$,
\item $\varphi_t(\Omega)$ is a Runge subset of $X$ for every $t \in [0,1]$, and
\item $\varphi_t$ fixes $A$ up to order $\ell$ where $\ell$ is such that \\
$\mathcal{I}_A^\ell \cdot \mathrm{VF}_{\mathrm{hol}}(X, A) \subseteq \overline{\mathrm{Lie}_{\mathrm{hol}}(X, A)}$.
\end{enumerate}
Then for every $\varepsilon > 0$ and for every compact $K \subset \Omega$ there exists a continuous family $\Phi \colon [0, 1] \to \aut(X)$, fixing $A$ pointwise, such that
$\Phi_0 = \id_X$ and $\| \varphi_t - \Phi_t \|_K < \varepsilon$ for all $t \in [0,1]$.

\smallskip
Moreover, these automorphisms can be chosen to be compositions of flows from a dense Lie subalgebra in $\mathrm{Lie}_{\mathrm{hol}}(X, A)$, see Varolin \cite{Varolin1}.
\end{theorem}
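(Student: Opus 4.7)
The plan is to discretize $\varphi_t$ in time, replace each near-identity piece by an honest automorphism supplied by the relative density hypothesis, and concatenate. I would first choose a relatively compact Runge open $U \subset X$ containing $K$ together with its orbit under small perturbations, and partition $[0,1]$ into $0 = t_0 < t_1 < \dots < t_N = 1$ so fine that for every $i$ and every $t \in [t_i, t_{i+1}]$ the map $\psi^{(i)}_t := \varphi_t \circ \varphi_{t_i}^{-1}$ is $\cont^1$-close to the identity on $\varphi_{t_i}(K) \subset U$. Differentiating at $s = t_i$ produces a time-dependent holomorphic vector field $V^{(i)}_s$ on $\varphi_{t_i}(\Omega)$. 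Because $\varphi_t$ fixes $A$ up to order $\ell$, the field $V^{(i)}_s$ lies in $\mathcal{I}_A^\ell \cdot \mathrm{VF}_{\mathrm{hol}}(\varphi_{t_i}(\Omega), A \cap \varphi_{t_i}(\Omega))$.

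Next, since $\varphi_{t_i}(\Omega)$ is Runge in $X$, a Cartan-B-type approximation with ideal constraints lets me approximate $V^{(i)}_s$ uniformly on $\varphi_{t_i}(K)$ by some $W^{(i)}_s \in \mathcal{I}_A^\ell \cdot \mathrm{VF}_{\mathrm{hol}}(X, A)$. The relative density hypothesis
\[
\mathcal{I}_A^\ell \cdot \mathrm{VF}_{\mathrm{hol}}(X, A) \subseteq \overline{\mathrm{Lie}_{\mathrm{hol}}(X, A)}
\]
then furnishes a finite linear combination of iterated Lie brackets of $\CC$-complete holomorphic vector fields vanishing on $A$ which approximates $W^{(i)}_s$ arbitrarily well on $\overline{U}$.

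Third, I would realize each such Lie-bracket expression by compositions of flows. This relies on the standard commutator formula
\[
\phi^{\sqrt{t}}_{Y_2} \circ \phi^{\sqrt{t}}_{Y_1} \circ \phi^{-\sqrt{t}}_{Y_2} \circ \phi^{-\sqrt{t}}_{Y_1} \; = \; \phi^{t}_{[Y_1, Y_2]} + O(t^{3/2})
\]
uniformly on compacta, applied iteratively to higher-order brackets, together with the fact that each factor $\phi^{t}_Y$ lies in $\aut(X)$ and fixes $A$ pointwise because $Y \in \mathrm{VF}_{\mathrm{hol}}(X, A)$. Concatenating these near-identity compositions across the partition and interpolating affinely in $t$ between nodes yields a continuous family $\Phi \colon [0,1] \to \aut(X)$ that fixes $A$ pointwise and satisfies $\Phi_0 = \id_X$.

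The main technical obstacle will be the error bookkeeping: the Runge, density, and commutator approximation steps each introduce losses that compound under $N$-fold composition and are magnified by the Lipschitz norms of the intermediate maps on $\overline{U}$. I would address this by prescribing per-step tolerances of order $\varepsilon/(CN)$ for a constant $C$ depending only on $\overline{U}$, and verifying inductively that the partial compositions keep the image of $K$ inside $U$ and that the required order-$\ell$ vanishing on $A$ is preserved throughout (since at every stage the approximating vector fields lie in $\mathrm{VF}_{\mathrm{hol}}(X, A)$, so do all of their flows). Once this uniform estimate holds on every piece $[t_i, t_{i+1}]$, the total $\cont^0$-error on $K$ is bounded by $\varepsilon$, and continuity of $\Phi$ in $t$ follows from the affine interpolation.
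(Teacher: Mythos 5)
This statement is not proved in the paper at all: it is quoted verbatim from Kutzschebauch--Leuenberger--Liendo \cite{singular}*{Theorem 6.3} (building on Forstneri\v{c}--Rosay and Varolin), so there is no internal proof to compare against. Your outline is, in substance, the standard Anders\'en--Lempert scheme that the cited source follows: discretize in time, pass to the time-dependent field $v_t = \dot\varphi_t\circ\varphi_t^{-1}$ on the Runge domain $\varphi_t(\Omega)$, approximate it by global sections of the coherent subsheaf $\mathcal{I}_A^\ell\cdot\mathcal{T}_X$ (this is exactly where normality/reducedness and Cartan Theory B enter, and where the order-$\ell$ fixing of $A$ is used), replace these by Lie combinations of complete fields vanishing on $A$ via the relative density hypothesis, and realize the result by compositions of their flows, which are automorphisms fixing $A$ pointwise. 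So the strategy is the right one.

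Two points in your third and fourth steps are too loose as written. First, a single application of the commutator identity with error $O(t^{3/2})$ does not by itself realize the time-$t$ map of a general Lie combination: a Lie combination is a \emph{linear combination} of iterated brackets, and the flow of a sum is not a composition of flows, so you must also invoke the Euler/Trotter splitting $\phi^t_{Y_1+Y_2}=\lim_{n\to\infty}\bigl(\phi^{t/n}_{Y_2}\circ\phi^{t/n}_{Y_1}\bigr)^n$ and, for brackets, the $n$-fold iterated commutator compositions with $n\to\infty$ (or at least error $o(1/N)$ per partition interval, uniformly in the nesting depth), otherwise the errors need not sum to less than $\varepsilon$ after refinement. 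Second, "interpolating affinely in $t$ between nodes" has no meaning inside $\aut(X)$ --- an affine combination of automorphisms is not an automorphism; continuity of $\Phi_t$ is instead obtained by letting the time parameters of the individual flows in each composition depend continuously (e.g.\ linearly) on $t$. With these two standard repairs your bookkeeping paragraph goes through, and the argument coincides with the proof in \cite{singular}.
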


For the case of star-shaped domain $\Omega \subset \CC^n = X$ and $A = \emptyset$ the Theorem is due to Anders\'en and Lempert \cite{AndersenLempert}. For a Runge domain $\Omega \subset \CC^n = X$ and $A = \emptyset$ it is due to Forstneri\v{c} and Rosay \cites{ForstnericRosay, ForstnericRosayErr}. The general version for a Stein manifold with the density property, but with $A = \emptyset$ was stated by Varolin \cites{Varolin1, Varolin2}.

\smallskip
A consequence of this Theorem is again holomorphic flexibility and $m$-transitivity. Both properties we have already established as a consequence of Proposition \ref{proptrans} for these Gizatullin surfaces that are treated in our article.

The Theorem also enables us to describe the $\cont^1$-path-identity component of the group of holomorphic automorphisms of $X$. To prove the transitivity, the existence of the submodule and for the generating set we only needed the $\CC$-complete vector fields given in Section \ref{seccharts}. Therefore we obtain the following description:

\begin{theorem}
\label{thmautos}
The following automorphisms of the surface $S$ described by the equations \eqref{eqdef1} and \eqref{eqdef2} generate a dense subgroup of the $\cont^1$-path-identity component of the group of holomorphic automorphisms of $S$:
\begin{align*}
(z_1, z_2) &\mapsto (z_0, z_1 + t z_0^{j+d}) \\
(z_1, z_2) &\mapsto (z_0, \exp(t z_0^{j+d-1} ) \cdot (z_1 - \lambda_1) + \lambda_1) \\
(z_1, z_2) &\mapsto (\exp(t (z_1-\lambda_1)^{j+d-1}) \cdot z_0, z_1) \\
(z_i, z_{i+1}) &\mapsto (\exp(t z_{i+1}^{j+d-1} ) \cdot (z_i - \lambda_i) + \lambda_i, z_{i+1}) \\
(z_i, z_{i+1}) &\mapsto (z_{i}, \exp(t z_{i}^{j+d-i-1} ) \cdot (z_{i+1} - \lambda_{i+1}) + \lambda_{i+1})
\end{align*}
where $j \in \NN_0$ and $t \in \CC$ and $i=1, \dots, d$. The formulas are to be understood in the charts given by $\varphi_0, \dots, \varphi_d$.
\end{theorem}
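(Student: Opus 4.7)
The plan is to deduce the statement directly from the Andersén--Lempert theorem stated at the beginning of Section \ref{secgeom}, fed by the density property already established in Theorem \ref{thmdensprop}. Recall that that theorem, when $S$ is smooth, produces: for every $\varphi$ in the $\mathcal{C}^1$-path-identity component of $\aut(S)$, an approximating family of compositions of flow maps drawn from a dense Lie subalgebra of $\mathrm{Lie}_{\mathrm{hol}}(S)$. What the present theorem asserts is that this dense Lie subalgebra can be taken to be the one generated by the explicit list of complete fields whose flows are displayed. Once that is shown, Andersén--Lempert closes the argument.

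First I would match the five families of one-parameter subgroups in the statement with the complete fields of Section \ref{seccharts}: they are exactly the flows of \eqref{eqcompletezerochart1}, \eqref{eqcompletezerochart2}, \eqref{eqcompletezerochart3}, \eqref{eqcompleteotherchart1} and \eqref{eqcompleteotherchart2} for all $j \in \NN_0$, up to the harmless constant shift of $z_{i+1}$ by $\lambda_{i+1}$ in the last line. Completeness and the explicit exponential form of these flows are recorded in Lemma \ref{lemzerochart} and in the lemma immediately following it, so no additional verification is needed here.

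Second, I would re-examine the three ingredients on which the proof of Theorem \ref{thmdensprop} rests, namely the transitivity of the Proposition \ref{proptrans}, the submodule inclusion of Proposition \ref{propalgmodule}, and the production of a generating set at a well-chosen point $p \in S$ with $p_0 \neq 0$. Each of these uses only $\regular(S)$-multiples and iterated Lie brackets of the five families above. Therefore the submodule $L$ produced by Proposition \ref{propalgmodule}, the transitivity needed for the homogeneity hypothesis of Theorem \ref{thmmodulehol}, and the generating set at $p$ all sit inside the (closure of the) Lie algebra generated by the listed fields. Consequently Theorem \ref{thmmodulehol} applies to this smaller Lie algebra and yields its density in $\mathrm{VF}_{\mathrm{hol}}(S)$.

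Third, the Andersén--Lempert theorem then gives the conclusion: every element of the $\mathcal{C}^1$-path-identity component is a uniform-on-compacta limit of finite compositions of time-$t$ maps of the listed flows, which is exactly the claimed density. In the singular case the same reasoning goes through with the relative density property of Theorem \ref{thmrelativedensprop} in place of Theorem \ref{thmdensprop}, with $A$ the finite set produced there and the path-identity component understood to be formed by automorphisms fixing $A$ to the appropriate order. The main obstacle, and therefore the only point requiring care, is the bookkeeping in the second step: one must verify that no unlisted complete vector field is used anywhere in Propositions \ref{proptrans} and \ref{propalgmodule}, and that the auxiliary step in the proof of Theorem \ref{thmdensprop} where $\nu$ is multiplied by $f = z_0 - p_0 \in \ker \nu$ to build the generating set can indeed be absorbed into the Lie algebra spanned by the enumerated fields (via Remark \ref{shearcompleteness} applied to $\nu$ itself, which is one of the listed generators). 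Once that verification is carried out, the statement follows at once.
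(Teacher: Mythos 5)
Your argument is essentially the paper's own: the paper justifies this theorem only by the remark immediately preceding it --- that the transitivity of Proposition \ref{proptrans}, the submodule of Proposition \ref{propalgmodule} and the generating set in the proof of Theorem \ref{thmdensprop} used nothing but the complete vector fields of Section \ref{seccharts} --- combined with the ``Moreover'' clause of the Anders\'en--Lempert theorem quoted in Section \ref{secgeom}, which is precisely your chain of reasoning. Your additional bookkeeping (noting that $f\cdot\nu=(z_0-p_0)\,z_0^d\,\partial/\partial z_1$ is a combination of the listed fields, and treating the singular case via Theorem \ref{thmrelativedensprop}) only makes explicit what the paper leaves implicit.
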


\begin{definition}
Let $X$ be a complex manifold and let $\Omega \subsetneq X$ be a proper subdomain.
\begin{enumerate}
\item We call $\Omega$ a \emph{Fatou--Bieberbach domain of the first kind} if there exist a biholomorphic map $\CC^n \to \Omega$, $n = \dim_\CC X$.
\item We call $\Omega$ a \emph{Fatou--Bieberbach domain of the second kind} if there exist a biholomorphic map $X \to \Omega$.
\end{enumerate}
\end{definition}

The density property implies the existence of both kinds of Fatou--Bieberbach domains according to Varolin \cite{Varolin2}. Since the surfaces $S$ defined by the equations  \eqref{eqdef1} and \eqref{eqdef2} admit at most one singular point and an at most finite subvariety $A$ of fixed points, Varolin's contruction also works if $S$ has only the relative density property.

\bigskip
By a result of the Wold and the author \cite{embedRiemann} every open Riemann surface can be properly holomorphically immersed in any smooth Stein surface with density property. Thus, the Gizatullin surfaces treated in this article are also potential targets for properly holomorphically embedding Riemann surfaces that a priori may not embed into $\CC^2$.

\section{Acknowledgement}
The author would like to thank Adrien Dubouloz for helpful comments and discussions.

\begin{bibdiv}
\begin{biblist}

\bib{AndersenLempert}{article}{
   author={Anders{\'e}n, Erik},
   author={Lempert, L{\'a}szl{\'o}},
   title={On the group of holomorphic automorphisms of ${\mathbf C}^n$},
   journal={Invent. Math.},
   volume={110},
   date={1992},
   number={2},
   pages={371--388},
   issn={0020-9910},
   %review={\MR{1185588}},
   doi={10.1007/BF01231337},
}

%
%\bib{Danielewski2}{article}{
%   author={Andrist, Rafael B.},
%   author={Kutzschebauch, Frank},
%   author={Lind, Andreas},
%   title={Holomorphic automorphisms of Danielewski surfaces II: Structure of
%   the overshear group},
%   journal={J. Geom. Anal.},
%   volume={25},
%   date={2015},
%   number={3},
%   pages={1859--1889},
%   issn={1050-6926},
%   %review={\MR{3358076}},
%   %doi={10.1007/s12220-014-9496-z},
%}
%
%\bib{fibred}{article}{
%   author={Andrist, Rafael},
%   author={Kutzschebauch, Frank},
%   title={The fibred density property and the automorphism group of the spectral ball},
%   eprint={arxiv:1501.07475},
%   date={2015},
%}

\bib{lengthfour}{article}{
   author={Andrist, Rafael},
   author={Kutzschebauch, Frank},
   author={Poloni, Pierre-Marie},
   title={The density property for Gizatullin surfaces of type $[[0,0,-r2,-r3]]$},
   eprint={arxiv:1510.08771},
   date={2015},
}

\bib{embedRiemann}{article}{
   author={Andrist, Rafael B.},
   author={Wold, Erlend Forn{\ae}ss},
   title={Riemann surfaces in Stein manifolds with the density property},
   %language={English, with English and French summaries},
   journal={Ann. Inst. Fourier (Grenoble)},
   volume={64},
   date={2014},
   number={2},
   pages={681--697},
   issn={0373-0956},
   %review={\MR{3330919}},
}

\bib{A-Z}{article}{
   author={Arzhantsev, I.},
   author={Flenner, H.},
   author={Kaliman, S.},
   author={Kutzschebauch, F.},
   author={Zaidenberg, M.},
   title={Flexible varieties and automorphism groups},
   journal={Duke Math. J.},
   volume={162},
   date={2013},
   number={4},
   pages={767--823},
   issn={0012-7094},
   %review={\MR{3039680}},
   %doi={10.1215/00127094-2080132},
}
\bib{Dubouloz-orig}{article}{
   author={Dubouloz, A.},
   title={Embeddings of generalized Danielewski surfaces in affine spaces},   date={2004},
   eprint={arxiv:math/0403208},
}
\bib{Dubouloz}{article}{
   author={Dubouloz, A.},
   title={Embeddings of Danielewski surfaces in affine spaces},
   journal={Comment. Math. Helv.},
   volume={81},
   date={2006},
   number={1},
   pages={49--73},
   issn={0010-2571},
   review={\MR{2208797}},
   doi={10.4171/CMH/42},
}

\bib{Gi}{article}{
   author={Gizatullin, M. H.},
   title={Quasihomogeneous affine surfaces},
   language={Russian},
   journal={Izv. Akad. Nauk SSSR Ser. Mat.},
   volume={35},
   date={1971},
   pages={1047--1071},
   issn={0373-2436},
   %review={\MR{0286791 (44 \#4000)}},
}
\bib{dan-giz-quasi}{article}{
   author={Gizatullin, M. H.},
   author={Danilov, V. I.},
   title={Examples of nonhomogeneous quasihomogeneous surfaces},
   language={Russian},
   journal={Izv. Akad. Nauk SSSR Ser. Mat.},
   volume={38},
   date={1974},
   pages={42--58},
   issn={0373-2436},
   %review={\MR{0342519 (49 \#7265)}},
}
\bib{dan-giz-autos}{article}{
   author={Gizatullin, M. H.},
   author={Danilov, V. I.},
   title={Automorphisms of affine surfaces. I},
   language={Russian},
   journal={Izv. Akad. Nauk SSSR Ser. Mat.},
   volume={39},
   date={1975},
   number={3},
   pages={523--565, 703},
   issn={0373-2436},
   %review={\MR{0376701 (51 \#12876)}},
}
%\bib{MR0437545}{article}{
%   author={Gizatullin, M. H.},
%   author={Danilov, V. I.},
%   title={Automorphisms of affine surfaces. II},
%   language={Russian},
%   journal={Izv. Akad. Nauk SSSR Ser. Mat.},
%   volume={41},
%   date={1977},
%   number={1},
%   pages={54--103, 231},
%   issn={0373-2436},
%   review={\MR{0437545 (55 \#10469)}},
%}
\bib{dan-giz-dens}{article}{
   author={Donzelli, Fabrizio},
   title={Algebraic density property of Danilov-Gizatullin surfaces},
   journal={Math. Z.},
   volume={272},
   date={2012},
   number={3-4},
   pages={1187--1194},
   issn={0025-5874},
   %review={\MR{2995164}},
   %doi={10.1007/s00209-012-0982-3},
}
\bib{wgraphs}{article}{
   author={Flenner, Hubert},
   author={Kaliman, Shulim},
   author={Zaidenberg, Mikhail},
   title={Birational transformations of weighted graphs},
   conference={
      title={Affine algebraic geometry},
   },
   book={
      publisher={Osaka Univ. Press, Osaka},
   },
   date={2007},
   pages={107--147},
   %review={\MR{2327237 (2008g:14117)}},
}
%\bib{completions}{article}{
%   author={Flenner, Hubert},
%   author={Kaliman, Shulim},
%   author={Zaidenberg, Mikhail},
%   title={Completions of $\CC^*$-surfaces},
%   conference={
%      title={Affine algebraic geometry},
%   },
%   book={
%      publisher={Osaka Univ. Press, Osaka},
%   },
%   date={2007},
%   pages={149--201},
%   %review={\MR{2327238 (2008m:14116)}},
%}
\bib{wgraphs-corr}{article}{
   author={Flenner, Hubert},
   author={Kaliman, Shulim},
   author={Zaidenberg, Mikhail},
   title={Corrigendum to our paper ``Birational transformations of weighted
   graphs'' [MR2327237]},
   conference={
      title={Affine algebraic geometry},
   },
   book={
      series={CRM Proc. Lecture Notes},
      volume={54},
      publisher={Amer. Math. Soc., Providence, RI},
   },
   date={2011},
   pages={35--38},
   %review={\MR{2768632}},
}

\bib{ForstnericRosay}{article}{
   author={Forstneri{\v{c}}, Franc},
   author={Rosay, Jean-Pierre},
   title={Approximation of biholomorphic mappings by automorphisms of ${\mathbf
   C}^n$},
   journal={Invent. Math.},
   volume={112},
   date={1993},
   number={2},
   pages={323--349},
   issn={0020-9910},
   %review={\MR{1213106}},
   doi={10.1007/BF01232438},
}

\bib{ForstnericRosayErr}{article}{
   author={Forstneri{\v{c}}, Franc},
   author={Rosay, Jean-Pierre},
   title={Erratum: ``Approximation of biholomorphic mappings by
   automorphisms of $\mathbf{C}^n$'' [Invent.\ Math.\ {\bf 112} (1993), no.
   2, 323--349;]},%  MR1213106 (94f:32032)]},
   journal={Invent. Math.},
   volume={118},
   date={1994},
   number={3},
   pages={573--574},
   issn={0020-9910},
   %review={\MR{1296357}},
   doi={10.1007/BF01231544},
}

\bib{Forstneric-book}{book}{
   author={Forstneri{\v{c}}, Franc},
   title={Stein manifolds and holomorphic mappings},
   series={Ergebnisse der Mathematik und ihrer Grenzgebiete. 3. Folge. A
   Series of Modern Surveys in Mathematics
%   [Results in Mathematics and
%   Related Areas. 3rd Series. A Series of Modern Surveys in Mathematics]
   },
   volume={56},
   note={The homotopy principle in complex analysis},
   publisher={Springer, Heidelberg},
   date={2011},
   pages={xii+489},
   isbn={978-3-642-22249-8},
   isbn={978-3-642-22250-4},
   %review={\MR{2975791}},
   %doi={10.1007/978-3-642-22250-4},
}
\bib{denscrit}{article}{
   author={Kaliman, Shulim},
   author={Kutzschebauch, Frank},
   title={Criteria for the density property of complex manifolds},
   journal={Invent. Math.},
   volume={172},
   date={2008},
   number={1},
   pages={71--87},
   issn={0020-9910},
   %review={\MR{2385667 (2008k:32066)}},
   %doi={10.1007/s00222-007-0094-6},
}
\bib{KKhyper}{article}{
   author={Kaliman, Shulim},
   author={Kutzschebauch, Frank},
   title={Density property for hypersurfaces $UV=P(\overline X)$},
   journal={Math. Z.},
   volume={258},
   date={2008},
   number={1},
   pages={115--131},
   issn={0025-5874},
   %review={\MR{2350038 (2008k:32062)}},
   %doi={10.1007/s00209-007-0162-z},
}
\bib{KaKuLeu}{article}{
   author={Kaliman, Shulim},
   author={Kutzschebauch, Frank},
   author={Leuenberger, Matthias},
   title={Complete algebraic vector fields on affine surfaces},
   date={2015},
   eprint={arxiv:1411.5484},
}
\bib{nontrans}{article}{
   author={Kovalenko, Sergei},
   title={Transitivity of automorphism groups of Gizatullin surfaces},
   journal={Int. Math. Res. Not. IMRN},
  YEAR = {2015},
    NUMBER = {21},
     PAGES = {11433--11484},
     }

\bib{K}{article}{
    AUTHOR = {Kutzschebauch, Frank},
     TITLE = {Flexibility properties in complex analysis and affine
              algebraic geometry},
 conference={ 
 title= {Automorphisms in birational and affine geometry}},
   
   book={
    SERIES = {Springer Proc. Math. Stat.},
    VOLUME = {79},
      PUBLISHER = {Springer, Cham}},
      PAGES = {387--405},
YEAR = {2014},
}

\bib{singular}{article}{
   author={Kutzschebauch, Frank},
   author={Leuenberger, Matthias},
   author={Liendo, Alvaro},
   title={The algebraic density property for affine toric varieties},
   journal={J. Pure Appl. Algebra},
   volume={219},
   date={2015},
   number={8},
   pages={3685--3700},
   issn={0022-4049},
   %review={\MR{3320241}},
   %doi={10.1016/j.jpaa.2014.12.017},
}
%\bib{Danielewski1}{article}{
%   author={Kutzschebauch, Frank},
%   author={Lind, Andreas},
%   title={Holomorphic automorphisms of Danielewski surfaces I---density of
%   the group of overshears},
%   journal={Proc. Amer. Math. Soc.},
%   volume={139},
%   date={2011},
%   number={11},
%   pages={3915--3927},
%   issn={0002-9939},
%   %review={\MR{2823038 (2012g:32033)}},
%   %doi={10.1090/S0002-9939-2011-10855-4},
%}
\bib{shears}{article}{
   author={Varolin, Dror},
   title={A general notion of shears, and applications},
   journal={Michigan Math. J.},
   volume={46},
   date={1999},
   number={3},
   pages={533--553},
   issn={0026-2285},
   %review={\MR{1721579 (2001a:32032)}},
   %doi={10.1307/mmj/1030132478},
}
\bib{Varolin1}{article}{
   author={Varolin, Dror},
   title={The density property for complex manifolds and geometric
   structures},
   journal={J. Geom. Anal.},
   volume={11},
   date={2001},
   number={1},
   pages={135--160},
   issn={1050-6926},
   %review={\MR{1829353 (2002g:32026)}},
   %doi={10.1007/BF02921959},
}
\bib{Varolin2}{article}{
   author={Varolin, Dror},
   title={The density property for complex manifolds and geometric
   structures. II},
   journal={Internat. J. Math.},
   volume={11},
   date={2000},
   number={6},
   pages={837--847},
   issn={0129-167X},
   %review={\MR{1785520 (2002g:32027)}},
   %doi={10.1142/S0129167X00000404},
}
\end{biblist}
\end{bibdiv}

\end{document}